\newtheorem{theorem}{Theorem}[section]
\newtheorem{lemma}[theorem]{Lemma}
\newtheorem{proposition}[theorem]{Proposition}
\newtheorem{corollary}[theorem]{Corollary}
\newtheorem{claim}[theorem]{Claim}
\theoremstyle{definition}
\newcommand{\blue}{\text{blue}}
\newcommand{\red}{\text{red}}
\newcommand*{\bfrac}[2]{\genfrac{(}{)}{}{}{#1}{#2}}
\title{Sharp thresholds for NAC-colourings and stable cuts in random graphs}
\author{Katie Clinch\thanks{School of Mathematics and Physics, University of Queensland, Australia. \texttt{k.clinch@uq.edu.au}} \and 
	John Haslegrave\thanks{School of Mathematical Sciences, Lancaster University, UK, \texttt{j.haslegrave@lancaster.ac.uk}} \and 
	Tony Huynh\thanks{Discrete Mathematics Group, Institute for Basic Science (IBS), Daejeon, South Korea, \texttt{tony@ibs.re.kr}} \and 
	Anthony Nixon\thanks{School of Mathematical Sciences, Lancaster University, UK, \texttt{a.nixon@lancaster.ac.uk}}}
\date{}
\begin{document}
	
	\maketitle
	
	\begin{abstract}
		NAC-colourings of graphs correspond to flexible quasi-injective realisations in $\mathbb {R} ^2$. A special class of NAC-colourings are those that arise from stable cuts. We give sharp thresholds for the random graph to have no stable cut and to have no NAC-colouring via exact hitting-time results: with high probability, the random graph process gains both properties at the precise time that every vertex is in a triangle. Our thresholds complement recent results on the thresholds for the random graph to be generically or globally rigid in $\mathbb {R} ^d$, and for all injective realisations to be globally rigid in $\mathbb {R} $.
	\end{abstract}
	
	\section{Introduction}
	Given a graph $G$, when is a realisation of $G$ rigid or flexible in space?  This problem has a rich mathematical history, dating at least back to the work of Cauchy and Euler who considered the rigidity and flexibility of polyhedra in $\mathbb{R}^3$. To make the above question precise, we now briefly introduce some definitions from rigidity theory.  For a more thorough introduction, we refer the interested reader to~\cite{whiteley97}.
	
	A \emph{($d$-)realisation} of a graph $G$ is a map $p: V(G) \to \mathbb{R}^d$. A realisation is \emph{generic} if its image is algebraically independent over $\mathbb{Q}$.  We say that $(G,p)$ is \emph{$d$-rigid} if the only continuous motions that preserve all edge lengths of $G$ are the Euclidean isometries of $\mathbb{R}^d$. We say that $(G,p)$ is \emph{globally $d$-rigid} if for every $d$-realisation $q:V(G) \to \mathbb{R}^d $ such that $\lVert p(u)-p(v) \rVert = \lVert q(u)-q(v) \rVert$ for all $uv \in E(G)$ we have $\lVert p(x)-p(y) \rVert = \lVert q(x)-q(y) \rVert$ for all $x,y \in V(G)$. It is easy to see that global rigidity implies rigidity, but that the converse does not necessarily hold.  
	
	A graph $G$ is \emph{generically (globally) $d$-rigid} if all of its generic $d$-realisations are (globally) rigid. It turns out that two generic $d$-realisations of a given graph will always be both (globally) rigid or both not (globally) rigid (see~\cite{AsimowRoth, connelly05, GHT10}).  Thus, generic $d$-rigidity and generic global $d$-rigidity only depend on the combinatorial structure of the graph $G$.
	
	There has been considerable recent interest in obtaining thresholds for rigidity properties in the binomial random graph. This is a very classical line of research. Indeed, the very first results in random graphs \cite{ER59, Gil59} concerned the threshold for the graph to be generically rigid in $\mathbb R$, that is, connected. The corresponding condition for a graph to be generically globally rigid in $\mathbb R$ is that it is $2$-connected, and the threshold for this was established by Erd\H{o}s and R\'enyi shortly afterwards \cite{ER61}.
	
	In higher dimensions, the relationship between connectivity and rigidity is not so simple, and the threshold for generic (global) rigidity in $\mathbb R^d$ was only determined very recently by Lew, Nevo, Peled, and Raz \cite{LNPR}. We remark, however, that these thresholds now follow more easily from Vill\'anyi's stunning result \cite{Vil25} that $d(d+1)$-connectivity is sufficient for a graph to be generically globally rigid in $\mathbb R^d$, together with standard facts about the $k$-core of a random graph.
	
	For low dimensions, it is unclear why one should restrict to generic realisations.  Indeed, Benjamini and Tzalik \cite{BT22} asked the following question. Suppose a set of $n$ distinct points in $\mathbb R$ are given, and we are given the distances between each pair independently with probability $p$. How large does $p$ need to be for us to recover the whole set up to isometry with high probability? If the point set was restricted to be generic, this would be the $2$-connectivity result of Erd\H{o}s and R\'enyi, but it is far from clear how to deal with the possibility that a non-generic realisation may be harder to reconstruct. Note, however, that we do not need every injective realisation of the random graph corresponding to revealed distances to be globally rigid, since the realisation is chosen before it is decided which distances to reveal. In particular, it is sufficient that every realisation typically becomes globally rigid after randomly permuting the points. This problem was solved precisely by Gir\~ao, Illingworth, Michel, Powierski, and Scott \cite{GIMPS}, and subsequently Montgomery, Nenadov, and Szab\'o \cite{MNS24} proved the stronger result that in fact every injective realisation is globally rigid at the same threshold.
	
	The corresponding problem in $\mathbb{R}^d$ is not interesting for $d \geq 3$, since placing all but two points on a common hyperplane means that a specific distance needs to be revealed for the realisation to be globally rigid (or even rigid). One can still ask when most of the point set can be reconstructed, and Barnes, Petr, Portier, Randall Shaw, and Sergeev \cite{BPPRS} give bounds on the probability $p$ so that $n-o(n)$ points can be reconstructed.
	
	In this paper, we complete the story by establishing sharp thresholds for rigidity in the plane.  In dimension $2$, the question of when every realisation is rigid (but not necessarily globally rigid) is particularly interesting. For instance, the Peaucellier linkage, Burmester's focal mechanism (see Kempe~\cite{Kempe1877}), or the two constructions of flexible realisations of $K_{3,3}$ by Dixon \cite{Dixon} are classical examples of graphs that are generically rigid in the plane, but  admit paradoxically flexible (nongeneric) realisations.  Although the above realisations are all injective, the modern literature has instead focused on \emph{quasi-injective realisations}, which only require all edge distances to be positive.  One reason why this is the more natural setting when considering rigidity (rather than global rigidity) is that the set of injective realisations is not closed under continuous motions that preserve edge lengths, whereas the set of quasi-injective realisations is.  A remarkable theorem of 
	Grasegger, Legersk\'y, and Schicho~\cite{GLS2019} gives a purely combinatorial description of when a graph $G$ has a quasi-injective realisation in the plane. Their characterisation involves a special type of 2-colouring of the edges of $G$, which we now define.  A \emph{NAC-colouring} of $G$ is a surjective map $E(G) \to \{\red{}, \blue{}\}$ such that no cycle of $G$ contains exactly one red edge or exactly one blue edge. 
	Grasegger, Legersk\'y, and Schicho~\cite{GLS2019} prove that a connected graph has a flexible quasi-injective realisation in $\mathbb{R}^2$ if and only if it has a NAC-colouring.

	The following are our main results.  
	
	\begin{theorem} \label{thm:main1}
		In the binomial random graph $G(n,p)$, $p=\sqrt[3]{\frac{2\log n}{n^2}}$ is a sharp threshold for the property of being connected and having no NAC-colouring.  
	\end{theorem}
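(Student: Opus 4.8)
The plan is to establish both directions of the sharp threshold, and in fact the hitting-time statement alluded to in the abstract. Write $p_0=\sqrt[3]{2\log n/n^2}$, so that $\binom{n-1}{2}p_0^{\,3}\sim\log n$; thus $p_0$ is exactly the threshold, and in the random graph process exactly the hitting time $\tau$, for the event that every vertex lies in a triangle. The engine is the combinatorial anatomy of a NAC-colouring: every triangle of a NAC-colouring is monochromatic, and a connected graph admits a NAC-colouring if and only if there are partitions $\mathcal R,\mathcal B$ of $V(G)$, each with at least two classes and each class inducing a connected subgraph, whose common refinement consists of independent sets and such that every edge is internal to exactly one of $\mathcal R,\mathcal B$ (with the red edges being those internal to $\mathcal R$). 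In particular each class is \emph{triangle-closed}: it meets no triangle in exactly two vertices.

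\emph{The $0$-statement.} For $p=(1-\varepsilon)p_0$ one has $\mathbb E\bigl[\#\{v:\ v\text{ lies in no triangle}\}\bigr]=n^{\,1-(1-\varepsilon)^3+o(1)}\to\infty$, and a standard second-moment argument produces, w.h.p., a vertex $v$ with independent neighbourhood. Since $v$ also has a non-neighbour, $N(v)$ is a stable cutset; and any stable cutset $S$ of a connected graph gives a NAC-colouring, by colouring red every edge lying inside one of the two sides of $G-S$ together with its edges to $S$, and colouring blue everything else --- this is surjective and one checks the NAC condition directly. Hence w.h.p.\ $G(n,p)$ has a NAC-colouring and the property fails.

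\emph{The $1$-statement, which is the technical heart.} For $p=(1+\varepsilon)p_0$ a first moment gives, w.h.p., that every vertex lies in a triangle, so the graph is connected and the task is to show there is no NAC-colouring. Suppose there were one, with partitions $\mathcal R,\mathcal B$ as above. The blue subgraph cannot be connected and spanning (else $\mathcal B$ is trivial), and a class of size at least $2$ is connected with all internal edges one colour; chasing these two facts, and swapping the two colours if necessary, we may extract a smallest non-trivial class, a blue class $Q$ with $|Q|\ge 2$, which is connected (since inside $Q$ every edge is blue), triangle-closed, has every vertex joined to $V\setminus Q$, and whose edge-boundary decomposes along the red classes: each red class meeting both $Q$ and its complement is bipartite between the two sides with its $Q$-side independent, so the components of the bipartite boundary of $Q$ partition $Q$ into independent sets even though $G[Q]$ is connected. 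This condition, together with the symmetric structure on $V\setminus Q$ and with the bound $\binom{n-1}{2}p^3=\Theta(\log n)$ on how many triangles surround a fixed vertex, forces a bounded-size dense configuration, and a union bound over candidate witnesses $Q$, stratified by $|Q|$, rules it out w.h.p.\ for every $p\le(1+\varepsilon)p_0$. The main obstacle is isolating the right notion of witness: it must follow from a NAC-colouring yet have first moment $o(1)$ at the triangle threshold, whereas the naive candidates (a triangle-closed connected set, even a large one) already appear w.h.p.\ at $p_0$, so one is forced to exploit the compatibility of $\mathcal R$ \emph{and} $\mathcal B$ rather than either partition alone.

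\emph{From thresholds to the hitting time.} The property ``connected and having no NAC-colouring'' is monotone increasing: if $G$ is connected and $e\notin E(G)$, then restricting any NAC-colouring of $G+e$ to $G$ is still surjective --- the sole alternative, that $e$ is the unique edge of its colour, fails because the cycle through $e$ and a shortest path in $G$ between its ends would have exactly one edge of that colour --- and so is a NAC-colouring of $G$. Combining monotonicity with (i) the fact that just before time $\tau$ some vertex lies in no triangle and has a non-neighbour, so that $G_{\tau-1}$ has a stable cutset, hence a NAC-colouring, and the property fails; and (ii) the $1$-statement analysis, which shows that the dense witnesses have first moment $o(1)$ throughout $p\le(1+\varepsilon)p_0$ and hence are w.h.p.\ absent at time $\tau$ (which is concentrated at $p_0$), so that at time $\tau$ the only possible obstruction to having no NAC-colouring, a vertex in no triangle, is by definition absent --- we conclude that w.h.p.\ the property is first satisfied exactly at the triangle hitting time $\tau$. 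This gives the sharp threshold.
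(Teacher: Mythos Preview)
Your $0$-statement and monotonicity argument are fine and match the paper. The gap is the $1$-statement: you correctly identify the difficulty (``the main obstacle is isolating the right notion of witness \ldots\ the naive candidates already appear w.h.p.\ at $p_0$'') but you do not resolve it. The sentences ``forces a bounded-size dense configuration, and a union bound over candidate witnesses $Q$, stratified by $|Q|$, rules it out'' are an aspiration, not an argument. In particular, your proposed witness---a smallest non-trivial blue class $Q$---need not be small: if the blue partition has two classes of size about $n/2$, the smallest is of order $n$, and the structural features you list for $Q$ (connected, triangle-closed, boundary decomposing into independent pieces) do not by themselves yield a first-moment bound that is $o(1)$ at $p_0$. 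You note that one must use the interaction of \emph{both} partitions, but you never say what the resulting witness is or compute anything.

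The paper's route is quite different from what you sketch. It does not take a smallest class; instead it first proves a structural lemma (their Lemma~4.6) that, under two pseudorandomness properties holding a.a.s.\ well below $p_0$, any NAC-colouring has a single monochromatic component $B$ covering all but $O(n^{2/3}\log^{2/3}n)$ vertices. The witness is then the entire configuration outside $B$: the red components $R_1,\dots,R_\ell$ meeting $B$, with $S_i=V(R_i)\cap V(B)$ stable and $T_i=V(R_i)\setminus V(B)$ partitioning $V\setminus V(B)$, each $S_i$ dominated by $T_i$, and no edges from $T_i$ to $V(B)\setminus S_i$. The first-moment calculation is over tuples $(t_1,\dots,t_\ell)$ and is rather delicate (their Claim~4.8); in particular it is carried out at $p=(1-\eta)p_0$, not at $(1+\varepsilon)p_0$, and the conclusion is not ``no NAC-colouring'' but ``every NAC-colouring is stable'' (with a technical side condition). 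The hitting-time result then follows because the paper shows $\mathcal N=\mathcal N'\cap\mathcal T$ and that $\mathcal N'$ is \emph{almost} monotone (their Lemma~4.3), so having $\mathcal N'$ below $\tau_{\mathcal T}$ propagates up to $\tau_{\mathcal T}$. Your hitting-time paragraph implicitly assumes the witnesses are monotone in $p$, which they are not (they encode both presence and absence of edges), so even granting your $1$-statement at $(1+\varepsilon)p_0$ you would still need an argument of this type to get the hitting time.
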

	In fact, we prove a stronger exact hitting time result.
	By the above discussion, \Cref{thm:main1} also immediately implies the following.  
	
	\begin{theorem} \label{thm:main2}
		In the binomial random graph $G(n,p)$, $p=\sqrt[3]{\frac{2\log n}{n^2}}$ is a sharp threshold for the property of being connected and having no flexible quasi-injective realisation in $\mathbb{R}^2$.  
	\end{theorem}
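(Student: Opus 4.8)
The plan is to deduce \Cref{thm:main2} directly from \Cref{thm:main1} together with the characterisation of Grasegger, Legersk\'y, and Schicho~\cite{GLS2019}. The key observation is that for a \emph{connected} graph $G$, the assertion ``$G$ has a flexible quasi-injective realisation in $\mathbb{R}^2$'' is \emph{equivalent} to the assertion ``$G$ has a NAC-colouring''; taking negations, ``$G$ is connected and has no flexible quasi-injective realisation in $\mathbb{R}^2$'' and ``$G$ is connected and has no NAC-colouring'' define exactly the same event in the sample space of labelled graphs on $n$ vertices. (The conjunct ``connected'' is what makes the GLS biconditional applicable, and it is present in both statements, so no disconnected graphs need to be discussed separately.) Since the two events literally coincide, every threshold statement about one is a threshold statement about the other.

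Concretely, I would first recall that the GLS equivalence is a purely combinatorial biconditional with no probabilistic content, and then invoke \Cref{thm:main1} verbatim: for each $\varepsilon>0$, if $p\le(1-\varepsilon)\sqrt[3]{2\log n/n^2}$ then with high probability $G(n,p)$ is disconnected or admits a NAC-colouring, hence (by GLS) is disconnected or admits a flexible quasi-injective realisation; and if $p\ge(1+\varepsilon)\sqrt[3]{2\log n/n^2}$ then with high probability $G(n,p)$ is connected and has no NAC-colouring, hence is connected and has no flexible quasi-injective realisation. This is precisely the claimed sharp threshold. The same substitution promotes the stronger hitting-time form of \Cref{thm:main1} — in the random graph process both properties are acquired at the exact moment every vertex lies in a triangle — to the corresponding hitting-time statement for flexible quasi-injective realisations.

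There is no real obstacle here beyond the work already carried out: all of the difficulty is concentrated in \Cref{thm:main1}, whose proof I would expect to split into a $0$-statement (below the threshold a sparse substructure, most plausibly a vertex lying in no triangle, witnesses a NAC-colouring) and a more delicate $1$-statement (once every vertex is in a triangle, any NAC-colouring would have to be of the restricted ``stable cut'' type, which is then excluded), each handled by the usual first- and second-moment machinery. The only point to verify in the reduction itself is that the notions of ``quasi-injective'' (all edge lengths positive) and ``flexible'' (not rigid) in \Cref{thm:main2} are exactly those in the hypothesis of the GLS theorem; this is precisely the setting set up in the introduction, so nothing further is required.
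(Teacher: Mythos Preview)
Your proposal is correct and matches the paper's own treatment: the paper explicitly states that \Cref{thm:main2} follows immediately from \Cref{thm:main1} via the Grasegger--Legersk\'y--Schicho characterisation, which is precisely the reduction you spell out. Your additional remarks on the hitting-time version and the anticipated structure of the proof of \Cref{thm:main1} are accurate but go beyond what is needed for \Cref{thm:main2} itself.
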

	
	A \emph{stable cut} in a graph $G$ is a set of vertices $S$ such that $G-S$  is disconnected and $S$ is a stable set.  It is easy to see that if $G$ has a stable cut, then $G$ has a NAC-colouring.  Our proof of~\Cref{thm:main1} also establishes an exact hitting time result for the property of having no stable cut.  It turns out that the threshold is the same as the threshold for having no NAC-colouring. 
	
	\begin{theorem} \label{thm:main3}
		In the binomial random graph $G(n,p)$, $p=\sqrt[3]{\frac{2\log n}{n^2}}$ is a sharp threshold for the property of having no stable cut.  
	\end{theorem}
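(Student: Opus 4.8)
The plan is to prove the stronger hitting-time statement and deduce Theorem~\ref{thm:main3} from it: in the random graph process $(G_m)_{m\ge 0}$, with high probability (w.h.p.) the first time $\tau$ at which every vertex of $G_m$ lies in a triangle coincides with the first time at which $G_m$ has no stable cut. Since the expected number of triangles through a fixed vertex of $G(n,p)$ is $\binom{n-1}{2}p^3=(1+o(1))\tfrac12 n^2p^3$, which equals $\log n$ exactly when $p=\sqrt[3]{2\log n/n^2}$, the hitting time for ``every vertex in a triangle'' is sharply concentrated around $m=(1+o(1))\binom n2\sqrt[3]{2\log n/n^2}$ by a routine first- and second-moment computation. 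Combined with the monotonicity of the property ``$G$ has no stable cut'' (if $S$ is a stable cut of $G+e$ then $e\not\subseteq S$, so $S$ is stable in $G$ and $(G+e)-S$ is obtained from $G-S$ by possibly merging components, hence $G-S$ is still disconnected), this yields Theorem~\ref{thm:main3}.

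\emph{Before $\tau$ there is a stable cut.} At time $\tau-1$ some vertex $v$ lies in no triangle, so $N(v)$ is a stable set; and w.h.p.\ $G_{\tau-1}$ (whose number of edges is of order $n^{4/3+o(1)}$ w.h.p.) has maximum degree $o(n)$, so $\deg(v)\le n-2$ and $N(v)$ separates $v$ from $V\setminus N[v]\neq\emptyset$. Hence $G_{\tau-1}$ has a stable cut.

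\emph{At time $\tau$ there is no stable cut.} First observe that if $S$ is a stable cut of a graph $H$ and $\{v\}$ is a component of $H-S$, then $N_H(v)\subseteq S$ is stable, so $v$ lies in no triangle. Thus at time $\tau$ every component of $G_\tau-S$ has at least two vertices for any stable cut $S$; taking $A$ to be a smallest one gives a set with $2\le|A|\le n/2$ and $\partial A:=N(A)\setminus A\subseteq S$, so $\partial A$ is stable. It therefore suffices to show that w.h.p.\ no set $A$ with $2\le|A|\le n/2$ has $\partial A$ stable. We union-bound over $a=|A|$ and over the $a$-subsets $A_0$, using the identity
\[
\Pr\big[\partial A_0\text{ stable}\big]=\mathbb{E}\Big[(1-p)^{\binom{|\partial A_0|}{2}}\Big],\qquad |\partial A_0|\sim\mathrm{Bin}\big(n-a,\ 1-(1-p)^{a}\big),
\]
which holds because, conditioned on which vertices lie in $\partial A_0$, the edges inside $\partial A_0$ remain independent. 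Since $|\partial A_0|$ concentrates around its mean $\mu=(1+o(1))anp$ (for $a=o(n^{2/3})$) and $\tfrac12 n^2p^3=\log n$, so that $\tfrac12 p\mu^2=(1+o(1))a^2\log n$, the right-hand side is $(1-p)^{(1+o(1))\binom{\mu}{2}}=n^{-a^2+o(1)}$. As $\binom na\le n^a$, the $a$-th term of the union bound is at most $n^{a-a^2+o(1)}\le n^{-2+o(1)}$ for $a\ge 2$, and these sum to $o(1)$. Three further points complete this: the case $a=1$ is excluded by the definition of $\tau$; for polynomially large $a$ the exact estimate is replaced by coarser bounds, using both that a stable cut forces $|\partial A|\le\alpha(G(n,p))=O(n^{2/3}(\log n)^{2/3})$, which w.h.p.\ no $a$-set with $anp\gg\alpha$ satisfies, and that there is w.h.p.\ no empty bipartite graph between two sets of linear size; and, since $\tau$ is a random time, one restricts to the w.h.p.\ window of width $O(n^{4/3+o(1)})$ in which it lies and unions over its possible values, which is affordable because each value contributes only $n^{-2+o(1)}$.

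\emph{Main obstacle.} The crux is this last step, and in particular obtaining the estimate for $\Pr[\partial A_0\text{ stable}]$ with sufficient precision \emph{uniformly in $a$}. For bounded $a$ — notably $a=2,3$ — one needs the correct leading constant in the exponent $n^{-a^2+o(1)}$ to beat $\binom na$, so the lower tail of $\mathrm{Bin}(n-a,\,1-(1-p)^a)$ must be controlled sharply (and one must check that the fluctuations of $|\partial A_0|$ change the exponent only by $o(1)$); for polynomially large $a$ the coarser arguments must be patched across regimes according to whether $1-(1-p)^a$ is $o(1)$, bounded away from $0$ and $1$, or $1-o(1)$; and the interplay of the random time $\tau$ with the standard transfers between the process, $G(n,m)$ and $G(n,p)$ must be arranged so that the width of the window containing $\tau$ does not overwhelm the per-value bound.
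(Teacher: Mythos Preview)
Your proposal is correct and follows essentially the same route as the paper: prove the hitting-time statement $\tau_{\mathcal S}=\tau_{\mathcal T}$ by observing that before $\tau_{\mathcal T}$ a triangle-free neighbourhood gives a stable cut, and that at $\tau_{\mathcal T}$ any stable cut would leave only components of size $\ge 2$, then rule this out by a first-moment calculation over candidate small sides $A$ and their boundaries. The paper packages the first-moment calculation as a union bound over pairs $(|A|,|S|)$ (with the extra structure that $G[A]$ is connected and every vertex of $S$ has a neighbour in $A$), whereas you compute $\mathbb{E}\bigl[(1-p)^{\binom{|\partial A|}{2}}\bigr]$ directly; these are the same sum once you expand over $|\partial A|=\ell$, and the paper's two-regime split ($\ell<0.95\,npk$ versus $\ell\ge 0.95\,npk$) is exactly the ``lower-tail versus bulk'' dichotomy you flag as the main obstacle.

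One simplification worth adopting from the paper: rather than unioning over the $n^{4/3+o(1)}$ possible values of $\tau$ and needing a per-$m$ bound with polynomial slack, use the monotonicity you already noted. Define the monotone property $\mathcal S'$ (no stable cut whose removal leaves either $\ge 3$ components or two components each of size $\ge 2$), prove $G(n,p)\in\mathcal S'$ a.a.s.\ at a single $p=(1-\eta)\sqrt[3]{2\log n/n^2}$ slightly \emph{below} the threshold, and then couple so that $\tau_{\mathcal S'}<\tau_{\mathcal T}$ a.a.s.; since $\mathcal S=\mathcal T\cap\mathcal S'$, this immediately gives $G_{\tau_{\mathcal T}}\in\mathcal S$. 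This removes the need for your window argument and makes the transfer between models routine.
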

	
	Gallet, Grasegger, Legerský, and Schicho~\cite{GGLS2021} prove that having a stable cut is equivalent to having a `flex' on the sphere.  Thus,~\Cref{thm:main3} can also be interpreted as a sharp threshold result for rigidity on the sphere.  
	
	Our main theorems also complement results from~\cite{CGHHLN24} on the number of NAC-colourings and stable cuts in various graph classes. For example, in~\cite{CGHHLN24}, it is shown that $K_{n,n}$ has exactly $2^{2n-1}-2$ NAC-colourings.  Thus, there exist extremely dense graphs with many NAC-colourings, which is in stark contrast with the typical behaviour in $G(n,p)$ given by~\Cref{thm:main1}.

	\section{Preliminaries}
	
	We will consider several models for random graphs on $n$ vertices. All are homogeneous, that is, treat all potential edges equally. Throughout the paper, we say an event happens \textit{asymptotically almost surely} (or ``with high probability'') if its probability tends to $1$ as $n\to\infty$. 
	
	The best-known model is the \textit{binomial random graph} (or Gilbert model) $G(n,p)$. In this model a graph on $n$ vertices is constructed by connecting vertices randomly. Each edge is included in the graph with probability $p$ independently from every other edge. Typically we think of $p$ as a function of $n$.
	
	Although the binomial random graph is sometimes also referred to as the Erd\H{o}s--R\'enyi model, the model originally introduced by Erd\H{o}s and R\'enyi is slightly different: it is the graph $G(n;m)$ chosen uniformly at random from among all $n$-vertex graphs with exactly $m$ edges. Results about $G(n,p)$ typically correspond straightforwardly to those about $G(n;m)$ for $m\approx p\binom n2$, but the former is easier to work with.
	
	We will also consider the \textit{random graph process} $(G_t(n))_{t=0}^{\binom{n}{2}}$, a discrete-time process which begins with $G_0(n)$ being the empty graph on $n$ vertices. At each time step, an edge selected uniformly at random from those not in $E(G_{t-1}(n))$ (independently of previous choices) and added to $G_{t-1}(n)$ to give $G_t(n)$. One may equivalently think of this as a sequence of random graphs with $G_t(n)\sim G(n;t)$, coupled so that $G_{t-1}(n)$ is a subgraph of $G_t(n)$ for each $t>0$.
	
	Finally, we briefly consider the \textit{random regular graph} $G_{\mathrm{reg}}(n,k)$, which is chosen uniformly at random from all $k$-regular graphs on $n$ vertices. Here we typically think of $k$ as being fixed (and, if $k$ is odd, restrict $n$ to even values). It is not clear from this description how to produce such a graph in practice. One standard approach is the \textit{configuration model}: give each vertex $k$ ``edge-ends''; pair the $nk$ edge-ends off uniformly at random, creating an edge corresponding to each pair; and condition on the resulting multigraph being simple. For fixed $k$, this event has probability bounded away from $0$; see \cite{BBconfig}.
	
	Jackson, Servatius and Servatius \cite{JSS2007} proved that a random $k$-regular graph, with $k \geq 4$, is asymptotically almost surely globally rigid in $\mathbb{R}^2$. Despite this strong form of rigidity, we can easily see that such graphs have many NAC-colourings and hence flexible realisations.
	
	\begin{proposition}
		A random $k$-regular $n$-vertex graph $G$, with constant $k\geq 4$, asymptotically almost surely has $2^{\Theta(n)}$ NAC-colourings.
	\end{proposition}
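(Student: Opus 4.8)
The plan is to sandwich the number of NAC-colourings between $2^{c_1n}$ and $2^{c_2n}$ for positive constants $c_1,c_2$. The upper bound is trivial: $G$ has exactly $kn/2$ edges, so there are at most $2^{kn/2}=2^{O(n)}$ edge $2$-colourings, and hence at most that many NAC-colourings. All of the work is in the lower bound, which I would obtain by exhibiting an explicit family of NAC-colourings indexed by the subsets of a suitable set of $\Omega(n)$ vertices.

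The construction is this. For a set $T\subseteq V(G)$, let $c_T$ colour an edge red if it has an endpoint in $T$ and blue otherwise. I claim $c_T$ is a NAC-colouring whenever $T$ is nonempty, no vertex of $T$ lies in a triangle of $G$, every two vertices of $T$ are at distance at least $3$ in $G$, and $G-T$ still contains an edge (the last condition holds automatically once $|T|<n/5$, by counting edges incident to $T$). Surjectivity is then immediate. Suppose a cycle $C$ had exactly one red edge $e$; then $e$ would have an endpoint $x\in T$, and $C-e$ would be a red-free path leaving $x$ --- impossible, as its first edge meets $x\in T$. Suppose instead $C$ had exactly one blue edge $uv$; then $u,v\notin T$, and $C-uv$ would be a $u$--$v$ path every edge of which meets $T$, so in particular its second vertex and its last-but-one vertex lie in $T$. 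If these two vertices coincide, then together with $u$ and $v$ they form a triangle through a vertex of $T$, contradicting triangle-freeness; otherwise, walking along the path and using that no two consecutive vertices of it lie outside $T$, one locates two distinct vertices of $T$ at distance at most $2$ in $G$, contradicting the distance condition. Hence $c_T$ is a NAC-colouring. The map $T\mapsto c_T$ is injective, since $T$ is recovered from $c_T$ as the set of vertices all of whose incident edges are red: a vertex outside $T$ has at most one neighbour in $T$ by the distance condition, hence (using $k\ge2$) has a neighbour outside $T$ and so an incident blue edge.

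It then remains to produce a set $W$ of $\Omega(n)$ vertices that are pairwise at distance at least $3$ in $G$ and none of which lies in a triangle; every nonempty $T\subseteq W$ will satisfy the hypotheses above, giving at least $2^{|W|}-1=2^{\Omega(n)}$ distinct NAC-colourings and completing the lower bound. For this, recall that the expected number of triangles in $G$ is bounded (indeed it tends to $(k-1)^3/6$), so by Markov's inequality at most $o(n)$ vertices lie in a triangle, asymptotically almost surely. Delete these vertices and then select vertices greedily, each time discarding the at most $k^2$ further vertices within distance $2$ of the vertex just chosen; this yields a set $W$ with $|W|\ge (n-o(n))/(k^2+1)=\Omega(n)$ and the required properties.

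The one genuinely non-routine step is finding the right gadget. Recolouring a single edge does not help, since in a $2$-edge-connected graph such as $G$ a lone edge of one colour lies inside the unique monochromatic component of the other colour, hence on a cycle carrying exactly one edge of its colour; a NAC-colouring forces both colour classes to be genuinely spread out. Recolouring all edges incident to a sparse, triangle-free set of vertices is about the simplest device that does this robustly, and the case analysis above is the price one pays for that simplicity --- everything else is standard bookkeeping for random regular graphs.
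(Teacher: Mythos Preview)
Your proof is correct and follows essentially the same strategy as the paper's: exhibit a linear-size set of well-separated, triangle-free vertices and obtain $2^{\Omega(n)}$ NAC-colourings by colouring red exactly the edges incident to an arbitrary nonempty subset. The only notable difference is that you require pairwise distance at least~$3$ whereas the paper takes distance at least~$4$; your tighter condition gives a slightly larger constant and is justified by your more detailed verification of the NAC property (which the paper only asserts).
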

	
	\begin{proof}
		Since the number of triangles in $G$ is distributed $\operatorname{Po}((k-1)^3/6)$ \cite{BBconfig}, almost all vertices in $G$ have stable neighbourhoods with high probability.  Let $X$ be a maximal (under inclusion) set of vertices such that the distance between any pair of distinct vertices  in $X$ is at least $4$.  Since $G$ is $k$-regular, for each $x \in X$, there are at most $k(1+(k-1)+(k-1)^2)+1=k^3-k^2+k+1$ vertices at distance at most $3$ from $x$ (including $x$). 
		By the maximality of $X$ we have $|X| \geq \frac{n}{k^3-k^2+k+1}$.
		Deleting any vertices of $X$ that fail to have stable neighbourhoods, we obtain (with high probability) a set $S$ of size at least $\frac{n}{k^3-k^2+k+2}$,
		such that vertices in $S$ have stable neighbourhoods that are pairwise disjoint and have no edges between them.
		Any colouring where each vertex in $S$ has its incident edges monochromatic, each edge with neither end in $S$ is blue, and not all edges of $G$ are blue, is a NAC-colouring, and there are $2^{|S|}-1=2^{\Omega(n)}$ such colourings.  
		The upper bound is trivial since there are only $2^{kn/2}$ ways to $2$-colour the edges of $G$.
	\end{proof}
	
	We say that a function $f(n)$ is a \textit{threshold} for a given property in the binomial random graph if $G(n,p)$ asymptotically almost surely has the property for $p=p(n)\gg f(n)$ and asymptotically almost surely does not have the property for $p=p(n)\ll f(n)$. We say that a threshold is \textit{sharp} if these statements hold instead for $p(n)>(1+\varepsilon) f(n)$ and $p(n)<(1-\varepsilon) f(n)$, for every $\varepsilon>0$. 
	
	We say that a graph property $\mathcal P$ is \textit{monotone} if adding edges to any graph with  property $\mathcal P$ results in another graph with property $\mathcal P$.
	Every monotone property has a threshold \cite{BT87} (but not necessarily a sharp threshold).
	For a monotone property $\mathcal P$, we also define the \textit{hitting time} $\tau_{\mathcal P}$ in the random graph process $(G_t(n))$ to be the smallest $t$ for which $G_t(n)\in \mathcal P$. Since $\mathcal P$ is monotone, $G_t(n)\in \mathcal P$ if and only if $t\geq \tau_\mathcal P$.
	
	\section{The threshold for stable cuts}
	
	In this section we establish the threshold for a graph to have no stable cut. We show that this coincides with the  threshold for every vertex to be in a triangle, that is for no vertex to have a stable neighbourhood. In fact we prove a stronger ``hitting-time'' result. We denote the property that every vertex is in a triangle by $\mathcal T$, and the property of having no stable cut by $\mathcal S$.
	
	We use the known threshold for $\mathcal T$.
	\begin{lemma}[{\cite[Section 7]{Kriv97}}]\label{triangle-threshold}
		In the binomial random graph model,  $\sqrt[3]{\frac{2\log n}{n^2}}$ is a sharp threshold for $\mathcal T$.
	\end{lemma}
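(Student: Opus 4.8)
The plan is to treat the two halves of the sharp threshold separately, using the first‑ and second‑moment methods applied to the random variable $X$ counting the vertices $v$ for which $N(v)$ is an independent set — equivalently, the vertices lying in no triangle, so that $G(n,p)$ has $\mathcal T$ exactly when $X=0$. Since $\mathcal T$ is monotone it suffices to consider $p=c\sqrt[3]{2\log n/n^2}$ for a fixed constant $c\ne 1$; write $\lambda:=(n-1)p$, so $\lambda\to\infty$ and $n^2p^3=2c^3\log n$. The key estimate, which drives both directions, is
\[
\mathbb P(v\text{ lies in no triangle})=n^{-c^3+o(1)} .
\]
I would prove this by conditioning on the degree $D:=\deg v\sim\mathrm{Bin}(n-1,p)$ and writing the left‑hand side as $\mathbb E\bigl[(1-p)^{\binom D2}\bigr]$. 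A Chernoff bound confines $D$ to $[(1-\delta)\lambda,(1+\delta)\lambda]$ with probability $1-e^{-\Omega(\lambda)}$, which is negligible beside $n^{-c^3}$; on that range, since $p\to 0$, one has $(1-p)^{\binom D2}=\exp\bigl(-(1+o(1))p\binom D2\bigr)$ and $p\binom D2=(1+O(\delta))\,n^2p^3/2=(1+O(\delta))c^3\log n$, giving $(1-p)^{\binom D2}=n^{-(1+O(\delta))c^3}$. Letting $\delta\to 0$ yields the estimate, and hence $\mathbb E[X]=n^{1-c^3+o(1)}$.

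For $c>1$: choosing $\delta$ small enough that $(1-\delta)^2c^3>1$, the upper‑bound half of the estimate gives $\mathbb E[X]\to 0$, so $X=0$ a.a.s.\ by Markov's inequality — this is the $1$‑statement. For $c<1$: the estimate gives $\mathbb E[X]\to\infty$, and I would apply the second‑moment method. Writing $X=\sum_v\mathbf 1[v\text{ in no triangle}]$, we have $\mathbb E[X^2]=\mathbb E[X]+\sum_{u\ne v}\mathbb P(u,v\text{ both in no triangle})$, so (as $\mathbb E[X]\to\infty$) it is enough to show $\mathbb P(u,v\text{ both in no triangle})\le(1+o(1))\,\mathbb P(u\text{ in no triangle})\,\mathbb P(v\text{ in no triangle})$ uniformly over the $n(n-1)$ ordered pairs; this gives $\mathbb E[X^2]=(1+o(1))(\mathbb E X)^2$ and hence $X>0$ a.a.s.\ by Chebyshev's inequality.

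To bound $\mathbb P(u,v\text{ both in no triangle})$, I would reveal whether $uv\in E(G)$ together with the neighbourhoods $N(u)$ and $N(v)$. Conditioned on these, ``$N(u)$ independent'' and ``$N(v)$ independent'' together forbid exactly $\binom{|N(u)|}2+\binom{|N(v)|}2-\binom{|N(u)\cap N(v)|}2$ potential edges, by inclusion--exclusion. When $N(u)\cap N(v)=\varnothing$ and $uv\notin E(G)$ these edge‑sets are disjoint, and after coupling $|N(u)|$ and $|N(v)|$ with the relevant degree‑type variables (to avoid needing the precise value of $\mathbb P(v\text{ in no triangle})$) the joint probability is at most $(1+o(1))$ times the product. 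The expected overlap size is $(n-2)p^2=c^2(2\log n)^{2/3}n^{-1/3}\to 0$, so a non‑empty overlap is an $o(1)$‑probability event; crucially, a non‑empty overlap does not \emph{reduce} the total number of required non‑edges by much, since the ``cross terms'' $|N(u)\cap N(v)|\cdot(|N(u)\setminus N(v)|+|N(v)\setminus N(u)|)$ in the exponent more than compensate, so its contribution stays below $o(1)$ times the product of the individual probabilities. Finally the case $uv\in E(G)$ is even more restrictive: if $u$ lies in no triangle then $N(u)\cap N(v)=\varnothing$, and this case contributes only a lower‑order term (an extra factor of $p$).

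I expect this variance estimate to be the main obstacle. The delicate point is not the crude bound that a non‑empty overlap has probability $o(1)$, but ensuring that all the degree‑conditioning estimates are sharp enough that the $n^{o(1)}$ error in the key estimate cancels between the first and second moments, so that $\mathbb P(u,v\text{ both in no triangle})$ really is at most $(1+o(1))$, not merely $n^{o(1)}$, times the product of the individual probabilities, uniformly over all $\Theta(n^2)$ pairs. Everything else — the Chernoff and Chebyshev bounds and the couplings — is routine.
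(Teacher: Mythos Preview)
The paper does not supply its own proof of this lemma: it is quoted as a known result from Krivelevich~\cite{Kriv97}, so there is nothing to compare against beyond the citation. Your proposal is therefore not a ``different route'' so much as a self-contained replacement for the citation.

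As a proof in its own right, your plan is the standard and correct one. The first-moment half is routine exactly as you describe. For the second moment, your outline is sound and you have correctly located the only delicate point: one really does need $\mathbb{P}(A_u\cap A_v)\leq(1+o(1))\,\mathbb{P}(A_u)\,\mathbb{P}(A_v)$, not merely an $n^{o(1)}$ factor. The clean way to extract this is to condition on $uv\notin E$ (probability $1-p$) and write the joint probability as $\mathbb{E}\bigl[(1-p)^{\binom{D_u}{2}+\binom{D_v}{2}-\binom{K}{2}}\bigr]$, with $D_u,D_v$ functions of independent families $(I_w)_w,(J_w)_w$ of Bernoullis and $K=|N(u)\cap N(v)|$. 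Since $(1-p)^{\binom{D_u}{2}}$ and $(1-p)^{\binom{D_v}{2}}$ depend on disjoint families, the ``$K=0$'' part factors exactly as $\bigl(\mathbb{E}[(1-p)^{\binom{D'}{2}}]\bigr)^2$ with $D'\sim\operatorname{Bin}(n-2,p)$, which matches $\mathbb{P}(A_u)^2$ up to a $(1+O(p))$ factor coming from the $n-2$ versus $n-1$ discrepancy. The excess from $K\geq 2$ is controlled by noting that $\binom{D_u}{2}+\binom{D_v}{2}-\binom{K}{2}\geq\binom{D_u}{2}$ always, and that $\mathbb{P}(K\geq 2\mid D_u=d)\leq\binom{d}{2}p^2$; combining these and the degree concentration gives a contribution of order $(np)^2p^2\cdot\mathbb{P}(A_u)=O(n^2p^4)\,\mathbb{P}(A_u)=o(\mathbb{P}(A_u))$, which is $o(\mathbb{P}(A_u)^2)$ since $\mathbb{P}(A_u)=n^{-c^3+o(1)}$ with $c<1$. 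This is a cleaner bookkeeping than the ``cross terms compensate'' heuristic you mention, which is a little misleading as stated, but the substance of your argument is correct.
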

	
	We will prove the following strong hitting-time version alluded to in the introduction.
	\begin{theorem}\label{thm:ERnostablecut}
		In the random graph process $(G_t(n))$, asymptotically almost surely $\tau_{\mathcal S}=\tau_{\mathcal T}$.
	\end{theorem}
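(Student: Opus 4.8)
The plan is to prove the two inequalities $\tau_{\mathcal S}\ge\tau_{\mathcal T}$ and $\tau_{\mathcal S}\le\tau_{\mathcal T}$ separately. The first is almost immediate: a.a.s.\ every graph in the process up to time $\tau_{\mathcal T}$ has maximum degree $o(n)$ (this holds for $G_{\tau_{\mathcal T}}(n)$ and the maximum degree is monotone), and for $t<\tau_{\mathcal T}$ there is a vertex $v$ lying in no triangle, so that $N(v)$ is a stable set with $|N(v)|<n-1$; then $G_t(n)-N(v)$ contains the isolated vertex $v$ together with at least one further vertex, hence is disconnected, and $N(v)$ is a stable cut. So a.a.s.\ $t<\tau_{\mathcal S}$ for all $t<\tau_{\mathcal T}$.

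For the reverse inequality it suffices to show that a.a.s.\ $G_{\tau_{\mathcal T}}(n)$ has no stable cut. Call a nonempty $A\subseteq V(G)$ \emph{bad} if $N(A)\setminus A$ is a stable set (with $\emptyset$ counting as stable), and put $k:=3n^{2/3}(\log n)^{2/3}$. The reduction I would use is: \emph{if $G$ has no bad set $A$ with $1\le|A|\le k$ and $G$ has no two disjoint vertex sets of size greater than $k$ with no edges between them, then $G$ has no stable cut}; indeed, given a stable cut $S$ with sides $A_0,B_0$, the second hypothesis forces $\min(|A_0|,|B_0|)\le k$, and the smaller side $A_0$ satisfies $N(A_0)\setminus A_0\subseteq S$, so is bad. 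Now the case $|A|=1$ of the first hypothesis says exactly that every vertex lies in a triangle, i.e.\ it is $\mathcal T$, which holds at time $\tau_{\mathcal T}$ by definition; the ``no wide cut'' hypothesis holds, with room to spare, at any density $\gg n^{-2/3}(\log n)^{-1/3}$ (a routine union bound: for sizes $a,b\ge k$ the probability of such a pair is at most $\exp\big((a+b)\log n-pab\big)$, and $pk$ is of order $\log n$ with a comfortably large constant). Using that these two properties are monotone increasing, together with \Cref{triangle-threshold} (which gives a.a.s.\ $\tau_{\mathcal T}>(1-\varepsilon)\binom n2\sqrt[3]{2\log n/n^2}$ for fixed small $\varepsilon>0$) and the standard sandwiching of $G_t(n)$ between binomial random graphs of slightly smaller and slightly larger density, everything reduces to the following estimate.

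\textbf{Key estimate:} for a suitable constant $c$ with $2^{-1/3}<c<1$ and $p=c\sqrt[3]{2\log n/n^2}$, a.a.s.\ $G(n,p)$ has no bad set $A$ with $2\le|A|\le k$. The computation behind this is: fix $A$ with $|A|=a$; since the edges incident with $A$ and the edges inside $V\setminus A$ are independent, $\Pr[A\text{ is bad}]=\mathbb E\big[(1-p)^{\binom M2}\big]$ where $M=|N(A)\setminus A|$ is binomial with mean $\mu_a=(n-a)\big(1-(1-p)^a\big)$. Splitting according to whether $M\ge(1-\delta)\mu_a$, the complementary event has probability at most $e^{-\Omega_\delta(\mu_a)}$ by a Chernoff bound, while on the main event $(1-p)^{\binom M2}\le(1-p)^{\binom{(1-\delta)\mu_a}{2}}\le\exp\big(-(1-\delta)^2\mu_a^2 p/2\big)$; since $\mu_a=(1+o(1))anp$ when $a=o(1/p)$ and $\mu_a$ is of order $n$ otherwise, this yields $\Pr[A\text{ is bad}]\le e^{-\Omega(\mu_a)}+\exp\big(-(1-o(1))(1-\delta)^2a^2n^2p^3/2\big)$. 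Multiplying by $\binom na\le n^a$ and summing over $a$: the first term is summable because $np\gg\log n$; the second equals $n^{\,a-(1-o(1))(1-\delta)^2c^3a^2}$ for $a=o(1/p)$, which tends to $0$ for every $a\ge2$ as soon as $(1-\delta)^2c^3>1/2$ --- possible precisely because $c>2^{-1/3}$ --- while for $a$ of order $1/p$ or larger the exponential saving, of order $n^{4/3}(\log n)^{1/3}$, dwarfs the trivial count $n^k=e^{O(n^{2/3}(\log n)^{5/3})}$.

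I expect the key estimate to be the main obstacle, and the delicate point is that one must be essentially lossless in the exponent $a^2n^2p^3/2$. The naive bound $\Pr[A\text{ is bad}]=\sum_{S'}\Pr[N(A)\setminus A=S']\,(1-p)^{\binom{|S'|}2}\le\sum_{S'}(1-p)^{\binom{|S'|}2}$ discards the factor $\Pr[N(A)\setminus A=S']$ and so overcounts by $e^{\Theta(anp)}$ --- the entropy of choosing the independent neighbourhood among all $n$ vertices --- which swamps the $e^{-\Theta(a^2n^2p^3)}$ one is trying to use; one really has to exploit the concentration of $|N(A)\setminus A|$ about $\mu_a\approx anp$. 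The tightness is genuine: the threshold for having no bad pair is exactly $2^{-1/3}$ times that for $\mathcal T$, so at the $\mathcal T$-threshold itself the expected number of bad pairs is only $\Theta(n^{-2})$ --- small, but merely polynomially so --- which is why the two hitting times coincide and why no slack may be wasted at $a=2$. The $|A|=1$ case and the wide-cut bound, by contrast, are comfortably soft.
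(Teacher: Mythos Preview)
Your proof is correct and follows the same architecture as the paper's: both isolate a monotone auxiliary property which, together with $\mathcal T$, implies $\mathcal S$, show it already holds in $G(n,p)$ at $p=(1-\eta)\sqrt[3]{2\log n/n^2}$ for some small $\eta>0$, and transfer to the process via the standard coupling $G_M(n)\sim G(n,p)$ with $M\sim\operatorname{Bin}\bigl(\binom n2,p\bigr)$. The paper phrases the auxiliary property as $\mathcal S'$ (no stable cut whose removal leaves $\ge3$ components or two non-singleton components); you phrase it as ``no bad $A$ with $2\le|A|\le k$, plus no wide cut''. These are interchangeable for the purpose, and your monotonicity check is right: if $A$ is bad in $G'\supseteq G$ then $N_G(A)\setminus A\subseteq N_{G'}(A)\setminus A$, and a subset of a set that is stable in $G'$ is certainly stable in $G$.

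The one substantive difference is in the first-moment computation. The paper enumerates over both $|A|=k$ and $|S|=\ell$, bounds the expected number of configurations by a product involving $\binom n\ell(kp)^\ell e^{-p\binom\ell2}$, and then has to split into cases according to whether $\ell$ is near its optimiser $\ell\approx npk$; it also uses connectivity of $G[A]$ via a spanning-tree count. You instead write $\Pr[A\text{ bad}]=\mathbb E\bigl[(1-p)^{\binom M2}\bigr]$ with $M=|N(A)\setminus A|$ binomial and invoke Chernoff once to localise $M$ near $\mu_a\approx anp$. This is cleaner --- the paper's case analysis on $\ell$ is in effect rediscovering that $|S|$ concentrates at $npk$ --- and you do not need connectivity of $A$ at all. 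Both routes land on the same critical exponent $a^2n^2p^3/2$ at $a=2$ and the same constraint on the sub-threshold constant (your $c^3>1/2$, the paper's ``$\eta$ sufficiently small''), and your diagnosis of why $a=2$ is tight is exactly right.
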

	\begin{corollary}
		In the binomial random graph model, $\sqrt[3]{\frac{2\log n}{n^2}}$ is a sharp threshold for $\mathcal S$.    
	\end{corollary}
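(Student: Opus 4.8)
The corollary follows at once from \Cref{thm:ERnostablecut} and \Cref{triangle-threshold}. First, $\mathcal S$ is monotone: adding an edge cannot produce a stable cut, since a stable set of $G+e$ is still stable in $G$, and if $(G+e)-S$ is disconnected then so is its spanning subgraph $G-S$. Hence $\mathcal S$ has a hitting time, and via the standard correspondence between $G(n;m)$ and $G(n,p)$ with $m\sim p\binom n2$, the statement ``$f(n)$ is a sharp threshold'' for a monotone property is equivalent to $\tau/\bigl(\binom n2 f(n)\bigr)\to 1$ in probability. \Cref{triangle-threshold} supplies this for $\tau_{\mathcal T}$ with $f(n)=\sqrt[3]{2\log n/n^2}$, and \Cref{thm:ERnostablecut} identifies $\tau_{\mathcal S}$ with $\tau_{\mathcal T}$ asymptotically almost surely, so the same holds for $\tau_{\mathcal S}$. (Only one half genuinely needs \Cref{thm:ERnostablecut}: a triangle-free vertex $v$ has an independent neighbourhood $N(v)$, which then disconnects $v$ from the rest, so $\mathcal S\subseteq\mathcal T$, and hence the lower ``$(1-\varepsilon)$'' half already follows from \Cref{triangle-threshold} alone.)

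So the work is \Cref{thm:ERnostablecut}, which I would prove as follows. The inequality $\tau_{\mathcal S}\ge\tau_{\mathcal T}$ is deterministic by the observation above (for $n$ large, whenever a triangle-free vertex exists there is one of degree at most $n-2$, whose neighbourhood is then a genuine stable cut). For $\tau_{\mathcal S}\le\tau_{\mathcal T}$ asymptotically almost surely I would first set up a structural dichotomy: $G$ has a stable cut if and only if $G$ has a vertex with an independent neighbourhood, or $V(G)$ can be partitioned into $A\sqcup B\sqcup S$ with $|A|,|B|\ge2$, $S$ independent, and no $A$--$B$ edges (call such a partition a \emph{robust stable cut}); the forward implication comes from taking one component of $G-S$ against the union of the others and distinguishing whether the former is a single vertex. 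Two facts are then decisive: ``having no robust stable cut'' is monotone (same argument as for $\mathcal S$), and $G_{\tau_{\mathcal T}}$ has no vertex with an independent neighbourhood by the definition of $\mathcal T$. Hence it suffices to show that for some fixed small $\varepsilon>0$, asymptotically almost surely $G(n,(1-\varepsilon)p^*)$ has no robust stable cut, where $p^*:=\sqrt[3]{2\log n/n^2}$: this transfers to $G_m$ with $m:=\lceil(1-\varepsilon)\binom n2 p^*\rceil$, and since $(1-\varepsilon)p^*$ lies below the threshold for $\mathcal T$ we have $\tau_{\mathcal T}\ge m$ asymptotically almost surely, so by monotonicity the supergraph $G_{\tau_{\mathcal T}}\supseteq G_m$ also has no robust stable cut, hence no stable cut at all.

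The heart of the argument is the first-moment bound. A robust stable cut is present iff some vertex set $A$ with $2\le|A|\le n/2$ has $N_G(A)\setminus A$ independent (the requirement that at least two vertices lie outside $A\cup N_G(A)$ being automatic once $\alpha(G)<n/4$, which holds asymptotically almost surely). For a fixed $a$-set $A$, write $X:=|N_G(A)\setminus A|$; this is $\mathrm{Bin}\bigl(n-a,\,1-(1-p)^a\bigr)$ and depends only on edges incident to $A$, hence is independent of the edges inside $V\setminus A$, which gives $\Pr[N_G(A)\setminus A\text{ independent}]=\mathbb E\bigl[(1-p)^{\binom X2}\bigr]$. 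I would bound $\sum_{a=2}^{n/2}\binom na\,\mathbb E[(1-p)^{\binom X2}]$ by splitting on the size of $a$. For bounded or slowly growing $a$ one needs the precise value $\mathbb E[(1-p)^{\binom X2}]=n^{-(1-\varepsilon)^3a^2(1+o(1))}$, obtained from a careful expansion around the typical value $X\approx apn$ (valid because $p\,\mathbb E X=o(1)$ in this range); this beats the $\binom na\le n^a$ choices because $(1-\varepsilon)^3a^2>a$ for every $a\ge2$ once $\varepsilon<1-2^{-1/3}$, the binding case being $a=2$, which contributes $\Theta\!\bigl(n^{\,2-4(1-\varepsilon)^3}\bigr)\to0$. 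For larger $a$ (where $p\,\mathbb E X$ is no longer $o(1)$) cruder estimates suffice: the essential point is that a set of at least two vertices has, with overwhelming probability, $|N_G(A)\setminus A|$ exceeding the independence number $\Theta(\log n/p)$ and so cannot be independent; concretely, setting $\alpha_0=3\log n/p$ one isolates the single term $\Pr[\alpha(G)\ge\alpha_0]$ and otherwise bounds $\Pr[|N_G(A)\setminus A|<\alpha_0]$ by $\binom{n}{\alpha_0}(1-p)^{a(n-a-\alpha_0)}$, and since $pn\gg\log n$ the factor $(1-p)^{a(n-a-\alpha_0)}$ is small enough to absorb $\binom na\le n^{a}$ and $\binom n{\alpha_0}$ and still sum to $0$.

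I expect the main obstacle to be making this first-moment computation run uniformly over the whole range $2\le|A|\le n/2$. Two points require care: obtaining the precise asymptotics of $\mathbb E[(1-p)^{\binom X2}]$ for small $|A|$ --- the approximation $\mathbb E[(1-p)^{\binom X2}]\approx(1-p)^{\binom{\mathbb E X}2}$ is correct only while $p\,\mathbb E X=o(1)$, and beyond that the expectation is instead governed by atypically small values of $X$, so a different, cruder bound (still retaining the independence constraint) is needed in the intermediate range $|A|\asymp n^{1/3}$; and choosing the cutoffs between the three regimes so that they overlap, keeping the union bound over the $\binom na$ sets under control throughout --- where it is important to keep using $\binom na\le n^a$ rather than $2^n$ even for $a$ close to $n/2$, again exploiting $pn\gg\log n$. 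Everything else --- the $G(n;m)$/$G(n,p)$ transfer lemmas, the estimate $\alpha(G(n,p))=\Theta(\log n/p)$, the structural and monotonicity facts, and the connectivity of $G(n,(1-\varepsilon)p^*)$ --- is routine.
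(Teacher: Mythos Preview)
Your derivation of the corollary from \Cref{thm:ERnostablecut} and \Cref{triangle-threshold} is correct and matches the paper's implicit argument, and your sketch of \Cref{thm:ERnostablecut} follows the paper's route as well: your ``robust stable cut'' is exactly the complement of the paper's monotone property $\mathcal S'$, and the first-moment bound at $(1-\varepsilon)p^*$ is precisely \Cref{lem:nobiggercut}. The only organisational difference is that the paper parametrises by both $|A|=k$ and $|S|=\ell$ (additionally forcing $G[A]$ connected to gain a Cayley factor $k^{k-2}p^{k-1}$) and splits into the cases $\ell<0.95\,npk$ versus $\ell\geq 0.95\,npk$, which corresponds exactly to your anticipated split between the regime where $\mathbb E\bigl[(1-p)^{\binom{X}{2}}\bigr]\approx(1-p)^{\binom{\mathbb E X}{2}}$ and the regime governed by atypically small $X$.
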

	
	We deduce \Cref{thm:ERnostablecut} from the following result. Suppose that $G$ has a stable cut $S$, but every vertex is in a triangle. Then $G- S$ has at least two components, all of which  have at least two vertices. We call such a cut $S$ a \emph{firm cut}.  Unfortunately, the property of not having a firm cut is not monotone.  For example, if $G$ is obtained from a path $xyz$ by gluing a triangle onto $y$, then $G$ has no firm cut, but $\{y\}$ is a firm cut in $G+xz$.  Thus, we introduce a slight modification, which is easily seen to be monotone. Let $\mathcal S'$ be the property that $G$ has no stable cut $S$ such that $G- S$ either has at least three components or has exactly two components, each with at least two vertices. If $G$ has a stable cut $S$ that is not of this form, then $G-S$ has two components, one of which is a single vertex, and that vertex cannot be in a triangle. Thus $\mathcal S=\mathcal T\cap \mathcal S'$.
	
	\begin{lemma}\label{lem:nobiggercut}There is some $\eta>0$ with the following property. For $p=(1-\eta)\sqrt[3]{\frac{2\log n}{n^2}}$, asymptotically almost surely $G(n,p)\in \mathcal S'$.
	\end{lemma}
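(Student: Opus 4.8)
\emph{Overview and reduction.}
The plan is to show that a.a.s.\ failure of $\mathcal S'$ forces an explicit low-complexity witness, and then to rule out such witnesses by a first-moment argument split according to the witness size; the admissible $\eta$ will be any $\eta\in(0,1-2^{-1/3})$. A routine first-moment bound shows that a.a.s.\ $\alpha(G)\le\alpha_0:=3n^{2/3}(\log n)^{2/3}$, where $\alpha(G)$ is the independence number; we work on this event. Suppose $G\notin\mathcal S'$, witnessed by a stable cut $S$. Then $|S|\le\alpha_0$, so $G-S$ has at least $n-\alpha_0\ge 4$ vertices, and one checks that in every case (if $G-S$ has two components both have $\ge 2$ vertices; if it has exactly three components all singletons then $|S|=n-3>\alpha_0$, impossible; otherwise there are $\ge 4$ components or one of size $\ge 2$) the components of $G-S$ can be grouped into two parts $A,B$ with $|A|,|B|\ge 2$; take $A$ to be the smaller, so $|A|\le n/2$. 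As $S$ is stable and $A$ is a union of components of $G-S$, every neighbour of $A$ lies in $A\cup S$, so after replacing $S$ by $N(A)\setminus A\subseteq S$ and $B$ by $V\setminus N[A]\supseteq B$ we may assume $S=N(A)\setminus A$; in particular $|N(A)\setminus A|\le\alpha(G)\le\alpha_0$. Hence it suffices to prove that a.a.s.\ there is no $A\subseteq V$ with $2\le|A|\le n/2$ such that $N(A)\setminus A$ is stable.

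\emph{The first-moment computation; small $A$.}
Fix $A$ with $|A|=a$ and put $W:=N(A)\setminus A$, so $|W|\sim\operatorname{Bin}(n-a,\,1-(1-p)^a)$; conditionally on $|W|$, stability of $W$ depends only on edges inside $W$, whence $\Pr[W\text{ stable}\mid|W|]=(1-p)^{\binom{|W|}{2}}$. Write $\beta:=(1-\eta)^3$, so $n^2p^3=2\beta\log n$. For $2\le a\le a_1:=C_0n^{1/3}(\log n)^{1/3}$ (with $C_0$ a large absolute constant) we have $pa_1\to 0$, so $\mathbb E|W|=(1-o(1))nap$ uniformly; a Chernoff bound gives $\Pr[|W|\le(1-\varepsilon)\mathbb E|W|]\le e^{-\Omega(\varepsilon^2 anp)}$, while on the complementary event $(1-p)^{\binom{|W|}{2}}\le n^{-(1-\varepsilon)^2\beta a^2(1-o(1))}$ (using $n^2p^3=2\beta\log n$). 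Therefore
\[
\mathbb E\bigl[\#\{A:|A|=a,\ N(A)\setminus A\text{ stable}\}\bigr]\le n^a\Bigl(e^{-\Omega(\varepsilon^2 anp)}+n^{-(1-\varepsilon)^2\beta a^2(1-o(1))}\Bigr).
\]
Since $np\gg\log n$, summing the first term over $a\ge 2$ gives $o(1)$. Choosing $\eta\in(0,1-2^{-1/3})$ makes $\beta>1/2$; then, taking $\varepsilon>0$ small enough that $2(1-\varepsilon)^2\beta>1$, for every $a\ge 2$ the exponent of the second term is $a\bigl(1-(1-\varepsilon)^2\beta a(1-o(1))\bigr)\le-\delta_0 a$ for some constant $\delta_0>0$, so its sum over $2\le a\le a_1$ is $o(1)$ as well.

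\emph{Medium and large $A$.}
For $a_1<a\le a_2:=\tfrac{7\log 2}{p}$ we have $pa\le 7\log 2$, so $\mathbb E|W|\ge c_1 pan$ for an absolute constant $c_1>0$ (since $1-(1-p)^a\gg pa$ while $pa$ stays bounded, and $n-a\ge n/2$), and hence $\mathbb E|W|\ge c_1 pa_1 n\ge 2\alpha_0$ once $C_0$ is large. Since stability of $W$ forces $|W|\le\alpha(G)\le\alpha_0\le\tfrac12\mathbb E|W|$, a Chernoff lower-tail estimate and a union bound over $A$ give $\Pr[\exists A,\ a_1<|A|\le a_2,\ N(A)\setminus A\text{ stable}]\le\sum_{a>a_1}n^a e^{-\mathbb E|W|/8}=o(1)$, again because $pn\gg\log n$. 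Finally, for $a_2<a\le n/2$ the set $B:=V\setminus N[A]$ satisfies $|B|\ge n-a-\alpha_0\ge n/3$ and sends no edge to $A$; unioning over $A$ and over a $\lceil n/3\rceil$-subset of $V\setminus A$ yields
\[
\Pr[\exists A,\ a_2<|A|\le n/2,\ N(A)\setminus A\text{ stable}]\le\sum_{a>a_2}4^n(1-p)^{a\lceil n/3\rceil}\le\sum_{a>a_2}e^{2n\log 2-pan/3}=o(1),
\]
since $pa_2 n/3=\tfrac73 n\log 2>2n\log 2$. Combining the three ranges with the reduction (and the $o(1)$ chance that $\alpha(G)>\alpha_0$) completes the proof.

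\emph{Main obstacle.}
The essential point is that the naive first moment over all triples $(A,S,B)$ diverges: the number of stable sets $S$ of size $\approx\frac{\log n}{3p}$ is super-exponentially large, so one must always contract $S$ to the minimal separator $N(A)\setminus A$ and sum over $A$ alone. After that, the tight case is $|A|=2$, where the expected number of bad configurations is $n^{2-4(1-\eta)^3(1-o(1))}$; this vanishes precisely when $(1-\eta)^3>1/2$, which is what fixes the admissible range of $\eta$.
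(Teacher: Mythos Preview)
Your argument is correct and, in fact, somewhat cleaner than the paper's. Both proofs are first-moment computations, but they are organised differently.

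The paper keeps two parameters: it takes a minimal witness $(A,S,B)$ with $G[A]$ connected and every vertex of $S$ adjacent to $A$, and bounds the probability $p_{k,\ell}$ that such a triple exists with $|A|=k$, $|S|=\ell$. The connectedness of $A$ contributes a Cayley factor $k^{k-2}p^{k-1}$, and the analysis splits according to whether $\ell$ is close to its ``typical'' value $npk$ (when the $\binom{\ell}{2}$ term in the exponent dominates) or not (when the Chernoff-type tail dominates). A double union bound over $(k,\ell)$ then finishes.

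You instead eliminate the second parameter by taking $S=N(A)\setminus A$ exactly; then $|S|$ is a binomial random variable whose distribution you control by Chernoff, so you only union over $a=|A|$. Your three-range split (small $a$ via concentration of $|W|$ and the $\binom{|W|}{2}$ exponent; medium $a$ by forcing $|W|\le\alpha(G)$ far below its mean; large $a$ by the crude ``no edges between two linear sets'' bound) replaces the paper's $\ell$-dichotomy. This buys you an explicit admissible range $\eta\in(0,1-2^{-1/3})$, whereas the paper's estimates only yield ``$\eta$ sufficiently small''; it also avoids needing $G[A]$ connected.

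Two minor remarks. In the medium-$a$ paragraph you write ``$1-(1-p)^a\gg pa$'', which is backwards; you mean $1-(1-p)^a\asymp pa$ (in fact $1-(1-p)^a\le pa$ always), but this does not affect the conclusion $\mathbb E|W|\ge c_1pan$. Also, the implicit intersection with the event $\{\alpha(G)\le\alpha_0\}$ in the medium and large ranges should be stated once more when you write the displayed union bounds, since as written those inequalities are only literally true on that event.
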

	
	\begin{proof}[Proof of \Cref{thm:ERnostablecut}]
		Since $\mathcal S\subseteq \mathcal T$, we must have $\tau_{\mathcal T}\leq \tau_{\mathcal S}$, and it suffices to show that asymptotically almost surely $G_{\tau_{\mathcal T}}(n)\in \mathcal S$. 
		
		Let $M\sim \operatorname{Bin}\left(\binom n2,p\right)$. Then $G_M(n)\sim G(n;M)\sim G(n,p)$ and so asymptotically almost surely $G_M(n)\in \mathcal S'$ by \Cref{lem:nobiggercut} and $G_M(n)\not\in \mathcal T$ by \Cref{triangle-threshold}. Thus asymptotically almost surely $\tau_{\mathcal S'}<\tau_{\mathcal T}$, and so $G_{\tau_{\mathcal T}}(n)\in \mathcal S'$. Since $\mathcal S=\mathcal{T}\cap \mathcal S'$, the result follows.
	\end{proof}
	
	\begin{proof}[Proof of \Cref{lem:nobiggercut}]Fix $\eta$ with $0<\eta<1$ to be chosen later, and set $p=(1-\eta)\sqrt[3]{\frac{2\log n}{n^2}}$. We bound the probability that $G(n,p)\not\in \mathcal S'$.
		
		Suppose $G(n,p)\not\in \mathcal S'$. That is, $G$ has a stable cut $S$ such that $G- S$ either has at least three components or has exactly two components, each with at least two vertices.  We first deal with the case where $G- S$ has at least two non-singleton components. In this case, there exist disjoint non-empty sets $A$, $B$ and $S$ such that $S$ is stable, there are no edges between $A$ and $B$, and each of $G[A]$ and $G[B]$ has at least one edge. We choose such a triple with $A\cup S$ minimal. Any such triple has $G[A]$ connected (otherwise we could move a component from $A$ to $B$), every vertex in $S$ adjacent to some vertex of $A$ (else we could move that vertex from $S$ to $B$), and $2\leq |A|\leq |B|$. 
		
		Fix integers $k$ and $\ell$ with $2\leq k\leq n-k-\ell$. We bound the probability $p_{k,\ell}$ that such a triple exists with $|A|=k$ and $|S|=\ell$. 
		
		A given $k$-set induces a connected graph with probability at most $k^{k-2}p^{k-1}$, since this (by Cayley's formula) is the expected number of spanning trees it contains. A given $\ell$-set has every vertex adjacent to a given $k$-set with probability $(1-(1-p)^k)^\ell<(kp)^\ell$. Thus we obtain
		\begin{align}p_{k,\ell}&\leq \binom{n}{k}\binom{n-k}{l}k^{k-2}p^{k-1}(kp)^\ell(1-p)^{k(n-k-\ell)+\binom{\ell}{2}}\nonumber\\
			&\leq\frac{1}{pk^2}\binom nk\binom n\ell (pk)^{k+\ell}\exp\left(-p\left(k(n-k-\ell)+\binom{\ell}{2}\right)\right)\label{pkl-bound}\\
			&\leq\frac{1}{pk^2}(nep)^k\left(\frac{nepk}{\ell}\right)^\ell\exp\left(-p\left(k(n-k-\ell)+\binom{\ell}{2}\right)\right).\nonumber\end{align}
		Differentiating with respect to $\ell$, we obtain that (for fixed $k$) $\left(\frac{nepk}{\ell}\right)^\ell$ is maximised when $\ell=npk$, so is at most $\exp(npk)$. Furthermore, if $\ell<0.95npk$ then $\left(\frac{nepk}{\ell}\right)^\ell<\exp(0.999npk)$ by direct calculation. Note also that $k\ell-\binom{\ell}{2}$ is maximised (for fixed $k$) when $\ell=k$, so $k\ell-\binom{\ell}{2}< k^2$.
		
		Consequently, whenever $\ell<0.95npk$, we have
		\begin{align*}p_{k,\ell}&\leq \frac{1}{pk^2}(nep)^k\exp(-p(0.001nk-2k^2))\\
			&\leq\frac{1}{pk^2}\exp(-0.001npk+O(k\log n)),\end{align*}
		which is $o(n^{-2})$ since $\log n=o(np)$.
		
		Thus we may assume $\ell\geq 0.95npk$. Now we have
		\begin{align*}p_{k,\ell}&\leq\frac{1}{pk^2}(nep)^k\exp\left(p\left(k^2+k\ell -\binom{\ell}{2}\right)\right)\\
			&\leq n^{(k+2)/3}\exp(-(0.45125-o(1))(k^2n^2p^3)).\end{align*}
		Since $n^2p^3=2(1-\eta)^3\log n$ and $k\geq 2$, this is $o(n^{-2})$ provided $\eta$ is sufficiently small. Thus, taking a union bound over all possible choices of $k$ and $\ell$, with high probability no such triple exists.
		
		Finally, we deal with the remaining case, where $G$ contains a cut $S$ such that $G- S$ has at least three components, with at most one component having more than one vertex. Choosing $A$ to be the union of two singleton components, and taking $S$ to be minimal, we obtain disjoint non-empty sets $A$, $B$ and $S$ such that $S$ is stable, there are no edges between $A$ and $B$, every vertex of $S$ is adjacent to some vertex of $A$, and $|A|=2$. Write $p_\ell$ for the probability that such a triple exists with $|S|=\ell$. Then, arguing as above, we obtain
		\[p_\ell\leq \binom n2\binom n\ell (2p)^\ell\exp\left(-p\left(2(n-2-\ell)+\binom{\ell}{2}\right)\right).\]
		Since this is $p^{-1}$ times our bound on $p_{2,\ell}$ from \eqref{pkl-bound} (the missing factor of $p$ comes from the edge inside $A$ being absent), it is $o(n^{-4/3})$, and again a union bound over possible values of $\ell$ gives the required result.
	\end{proof}
	
	\section{The threshold for NAC-colourings}
	
	In this section we consider when the random graph $G(n,p)$ has a NAC-colouring. Note that having no NAC-colouring is not a monotone property of graphs. For example, the empty graph on $n$ vertices has no NAC-colouring and nor does the complete graph on $n$ vertices, but every tree on $n$ vertices has a NAC-colouring (provided $n \geq 3$).  
	
	We therefore work with the property of being connected with no NAC-colouring, which we denote $\mathcal N$. We claim that $\mathcal N$ is a monotone property. Indeed, suppose $G'$ is a spanning subgraph of $G$, and $G$ does not have property $\mathcal{N}$. If $G$ is disconnected then so is $G'$. If $G$ has a NAC-colouring $c$, then no spanning tree of $G$ can be monochromatic in $c$, since if there is a blue spanning tree then any red edge would create an almost-monochromatic cycle. Therefore either $G'$ is disconnected or $c$ restricted to $G'$ uses both colours and so is a NAC-colouring.
	
	Thus property $\mathcal N$ has a threshold. In this section we prove that this coincides with the thresholds for $\mathcal T$ and $\mathcal S$, again proving the strong hitting-time version.
	\begin{theorem}\label{thm:nacthreshold}
		In the random graph process $(G_t(n))$, asymptotically almost surely $\tau_{\mathcal N}=\tau_{\mathcal T}$.
	\end{theorem}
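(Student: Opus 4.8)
The plan is to follow the same strategy as for \Cref{thm:ERnostablecut}: reduce the hitting-time statement to a one-sided $G(n,p)$ estimate just below the threshold. Since $\mathcal N\subseteq \mathcal T$ (a vertex with stable neighbourhood $\{a,b,c,\dots\}$ lets us colour all its incident edges red and everything else blue, giving a NAC-colouring once $G$ has at least one other edge, and isolated vertices make $G$ disconnected anyway), we have $\tau_{\mathcal T}\leq \tau_{\mathcal N}$, so it suffices to show that asymptotically almost surely $G_{\tau_{\mathcal T}}(n)\in\mathcal N$. As in the earlier proof, write $M\sim\operatorname{Bin}\binom{n}{2},p$ with $p=(1-\eta)\sqrt[3]{2\log n/n^2}$; then $G_M(n)\sim G(n,p)$, which is a.a.s.\ not in $\mathcal T$ by \Cref{triangle-threshold}, so $\tau_{\mathcal T}>M$ a.a.s. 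Hence it is enough to prove: for some $\eta>0$, $G(n,p)$ with $p=(1-\eta)\sqrt[3]{2\log n/n^2}$ is a.a.s.\ connected and has no NAC-colouring. Connectivity at this $p$ is immediate since $p$ is far above the connectivity threshold $\log n/n$, so the real content is the absence of a NAC-colouring.

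For that, I would argue contrapositively and bound the probability that a NAC-colouring exists, reducing the problem to a cut-like structure just as \Cref{lem:nobiggercut} does for stable cuts. The key structural fact about NAC-colourings I would use is this: if $c$ is a NAC-colouring of a connected graph $G$, then contracting all blue edges yields a graph whose every (multi)edge is red, and red edges within a blue-connected class would form an almost-monochromatic cycle, so each blue-connected component is an induced subgraph with no chords that are red within it; more usefully, the partition of $V(G)$ into blue components gives a partition $V_1,\dots,V_m$ ($m\geq 2$) such that every edge between distinct parts is red, and — by the no-single-red-edge-in-a-cycle condition — for each part $V_i$, the set of red edges leaving $V_i$ cannot contain exactly one edge of any cycle, which forces the ``red quotient'' to be a forest (otherwise a cycle in the quotient lifts to an almost-red cycle). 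Symmetrically, exchanging colours, the blue quotient obtained by contracting red components is also a forest. So a NAC-colouring yields a nontrivial partition $V=V_1\cup\dots\cup V_m$ in which both the red quotient multigraph and the blue quotient multigraph are forests, in particular each has at most $m-1$ edges; this is a strong sparsity constraint that should be as unlikely as a stable cut at this $p$.

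The main step is then a union bound over such partitions, closely mirroring the computation in \Cref{lem:nobiggercut}. I expect the extremal/worst case to again be a partition into two parts $A$ and $B=V\setminus A$ with $|A|=k$ small: here the forest conditions say the bipartite graph between $A$ and $B$, once each side is contracted, has essentially no ``extra'' edges beyond a spanning structure, which is extremely restrictive. One has to be a little careful, because unlike a stable cut there is no stable set $S$ to absorb, and the two parts can each be large and internally dense; the forest condition on the quotients, not sparsity inside the parts, is what we exploit. Concretely, I would fix $|A|=k$, bound the probability that both quotients (with respect to some admissible colouring, i.e.\ some way of splitting the $A$–$B$ edges into the two forests) are forests, summing over the $\binom nk$ choices of $A$ and over the relevant colour-assignments, and show the total is $o(1)$ for all $2\leq k\leq n/2$ when $\eta$ is small, using $n^2p^3=2(1-\eta)^3\log n$ exactly as before. \textbf{The hard part} will be setting up the right combinatorial reduction from NAC-colourings to the forest-quotient structure cleanly — in particular handling partitions into many parts $m$, where one must bound, for each of the (roughly) $m^n$ partitions, the probability that $E(G)$ restricted to the quotient can be $2$-coloured with both colour classes forests; I anticipate reducing the many-part case to the two-part case by merging parts (as in the ``$A\cup S$ minimal'' trick) so that only the refined two-block estimate, the one that parallels $p_{k,\ell}$, actually needs to be carried out.
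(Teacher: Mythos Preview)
Your reduction has a fatal flaw at the very first step. You claim it suffices to show that $G(n,p)$ with $p=(1-\eta)\sqrt[3]{2\log n/n^2}$ is a.a.s.\ in $\mathcal N$, i.e.\ connected with no NAC-colouring. But this is simply false: at that $p$ we are \emph{below} the $\mathcal T$ threshold, so a.a.s.\ some vertex $v$ has a stable neighbourhood, and colouring the edges at $v$ red and all others blue is a (stable) NAC-colouring. In fact you yourself note $\mathcal N\subseteq\mathcal T$, and $G(n,p)\notin\mathcal T$ a.a.s.\ at this $p$; hence $G(n,p)\notin\mathcal N$ a.a.s. The analogy with the proof of \Cref{thm:ERnostablecut} breaks down precisely because there the auxiliary property $\mathcal S'$ was genuinely monotone and already held below the threshold, whereas $\mathcal N$ itself cannot hold below the threshold. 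The paper repairs this by introducing $\mathcal N'$ (connected with no \emph{non-stable} NAC-colouring), proving an ``almost monotone'' lemma for $\mathcal N'$, and then running a two-round sprinkling argument with independent $G_1,G_2\sim G(n,p)$ to control the few bad stable colourings with $|S_c|=2$ that could become non-stable when the extra edge is added.

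Your proposed structural reduction is also incorrect. You assert that in any NAC-colouring the red quotient obtained by contracting blue components is a forest, because ``a cycle in the quotient lifts to an almost-red cycle''. It does not: a length-$k$ cycle in the red quotient lifts to a cycle with $k$ red edges and an arbitrary number of blue edges coming from the paths inside the blue components, which is perfectly compatible with the NAC condition. The $4$-cycle with alternating colours already gives a counterexample (the red quotient is a double edge). What \emph{is} true is only that red edges go between distinct blue components and vice versa; this does not yield the strong sparsity you want, and the paper instead proves a different structural lemma (one monochromatic component covers all but $O(n^{2/3}\log^{2/3}n)$ vertices) before carrying out a rather delicate first-moment calculation over the possible configurations of small red components hanging off it.
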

	\begin{corollary}
		In the binomial random graph model, $\sqrt[3]{\frac{2\log n}{n^2}}$ is a sharp threshold for $\mathcal N$.    
	\end{corollary}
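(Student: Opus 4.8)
The plan is to follow the template of the proof of \Cref{thm:ERnostablecut}. One inequality is immediate: every stable cut yields a NAC-colouring, so $\mathcal N\subseteq\mathcal S$, and as both properties are monotone, \Cref{thm:ERnostablecut} gives $\tau_{\mathcal N}\geq\tau_{\mathcal S}=\tau_{\mathcal T}$ asymptotically almost surely. For the reverse inequality it suffices to show that asymptotically almost surely $G_{\tau_{\mathcal T}}$ has no NAC-colouring, since $G_{\tau_{\mathcal T}}$ is asymptotically almost surely connected, being far beyond the connectivity threshold.

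The first step is to pin down the combinatorial shape of a NAC-colouring. Given a NAC-colouring of $G$, let $\mathcal P$ (respectively $\mathcal Q$) be the partition of $V(G)$ into the vertex sets of the connected components of the blue (respectively red) subgraph. The absence of an almost-monochromatic cycle says exactly that no red edge has both ends in one part of $\mathcal P$ and no blue edge has both ends in one part of $\mathcal Q$; unwinding this, every edge of $G$ lies inside a single part of exactly one of $\mathcal P,\mathcal Q$ (and its colour is then forced: blue if inside $\mathcal P$, red if inside $\mathcal Q$), and each part of $\mathcal P$ and of $\mathcal Q$ induces a connected subgraph. Call such a pair $(\mathcal P,\mathcal Q)$ a \emph{grid structure}; conversely, any grid structure for which some part of $\mathcal P$ and some part of $\mathcal Q$ each contains an edge recovers a NAC-colouring, so $G$ has a NAC-colouring if and only if it has a grid structure. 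Two further consequences of the NAC condition alone will be used repeatedly: if $u,v$ lie in a common part of $\mathcal P$ and a common part of $\mathcal Q$ then $uv\notin E(G)$, so each cell $P\cap Q$ is a stable set; and every triangle of $G$ is monochromatic (a triangle with two edges of one colour is an almost-monochromatic cycle), hence lies inside a single part of $\mathcal P$ or of $\mathcal Q$. In particular, since $G_{\tau_{\mathcal T}}\in\mathcal T$, any grid structure of $G_{\tau_{\mathcal T}}$ has every vertex lying in a monochromatic triangle.

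Next, playing the role of $\mathcal S'$, one introduces a monotone property $\mathcal N'$ that forbids only the \emph{robust} grid structures, excluding degenerate ones such as those having a part that is a single vertex, exactly as $\mathcal S'$ excludes firm cuts in which a component is a single vertex. The work here is to choose $\mathcal N'$ so that (with connectivity) it is monotone and so that on graphs in $\mathcal T$ the existence of any grid structure forces the existence of a robust one, whence $\mathcal T\cap\mathcal N'\subseteq\mathcal N$; both points parallel the $\mathcal S/\mathcal S'$ discussion, the obstructions to monotonicity again arising only from disconnected graphs. Granting this, the argument of \Cref{thm:ERnostablecut} applies once we establish the analogue of \Cref{lem:nobiggercut}: there is $\eta>0$ such that for $p=(1-\eta)\sqrt[3]{2\log n/n^2}$, asymptotically almost surely $G(n,p)$ has no robust grid structure. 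Then, taking $M\sim\operatorname{Bin}\!\big(\binom n2,p\big)$ as before gives $\tau_{\mathcal N'}<\tau_{\mathcal T}$ asymptotically almost surely, and hence $G_{\tau_{\mathcal T}}\in\mathcal T\cap\mathcal N'\subseteq\mathcal N$.

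The first-moment bound on grid structures is the heart of the matter and the step I expect to be hardest. I would split by the numbers of parts $s=|\mathcal P|$ and $t=|\mathcal Q|$. When $s$ and $t$ are both at least a suitable absolute constant, the ``aligned edge'' condition forces all but at most $\big(\tfrac1s+\tfrac1t\big)\binom n2$ pairs of vertices to be non-edges; the resulting factor $(1-p)^{\Omega(n^2)}=\exp(-\Omega(n^2p))$ comfortably beats the at most $(st)^n\le n^{2n}$ choices of the two partitions, since $n^2p=\Theta\big(n^{4/3}(\log n)^{1/3}\big)\gg n\log n$. When one of $s,t$ is bounded — say $s$ — some part of $\mathcal P$ has $\Omega(n)$ vertices, and since each of its cells is a stable set of size at most $\alpha(G(n,p))=O\big(n^{2/3}(\log n)^{2/3}\big)$ we get $t\to\infty$; here one combines spanning-tree probabilities for the parts (as in \Cref{lem:nobiggercut}) with the triangle condition, which for bounded $s$ forces every triangle through a vertex of a large part either to stay inside that part or to use one of boundedly many other parts — a ``clustering'' requirement ruled out because, with $n^2p^3=2(1-\eta)^3\log n$ and $\eta$ small, a typical vertex lies in $\Theta(\log n)$ triangles while a typical edge lies in only $O(1)$ of them, so no bounded set of vertices can contain all triangles through its members. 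The genuinely new difficulty compared with the stable-cut case is precisely this bounded-number-of-parts regime, where crude non-edge counting does not suffice and one must exploit how triangles are distributed; the remaining small configurations not already killed by robustness are handled by a union bound in the style of \Cref{lem:nobiggercut}.
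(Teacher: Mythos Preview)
Your overall architecture (reduce to an intermediate property below the triangle threshold, then transfer to $\tau_{\mathcal T}$) matches the paper, but three of the load-bearing steps do not go through as written.

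\textbf{The intermediate property cannot be made monotone.} You assert that one can choose a ``robustness'' notion so that $\mathcal N'$ is monotone and $\mathcal T\cap\mathcal N'\subseteq\mathcal N$, paralleling $\mathcal S'$. The paper in fact defines $\mathcal N'$ as ``connected with no \emph{non-stable} NAC-colouring'' (a NAC-colouring is \emph{stable} if one colour class is exactly the set of edges meeting some stable set $S_c$), and shows by example that this $\mathcal N'$ is \emph{not} monotone. The fix is a separate lemma: if $G\in\mathcal N'$ but $G+e\notin\mathcal N'$, then $G$ already had a stable NAC-colouring with $|S_c|=2$ and $e$ joins the two vertices of $S_c$. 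This ``almost-monotonicity'' is then exploited via a two-copy trick: one samples independent $G_1,G_2\sim G(n,p)$, proves that $G_1\in\mathcal N'$ \emph{and} that no stable NAC-colouring of $G_1$ with $|S_c|=2$ has its missing edge in $G_2$, and couples so that $G_{\tau_{\mathcal T}}$ sits between $G_1$ and $G_1\cup G_2$. This mechanism is a genuine idea that your outline lacks; the natural robustness conditions you gesture at (no singleton parts, or every vertex in a monochromatic triangle) are neither monotone nor implied by $\mathcal T$ in the way you need.

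\textbf{The ``many parts'' estimate is false.} The inequality ``all but $(\tfrac1s+\tfrac1t)\binom n2$ pairs are non-edges'' bounds $\sum_i\binom{|P_i|}{2}+\sum_j\binom{|Q_j|}{2}$ only for balanced partitions; if (say) $\mathcal P$ has one part of size $n-s+1$, the potential blue-edge count is $(1-o(1))\binom n2$ regardless of $s$. The correct dichotomy is by maximum part \emph{size}, not by the number of parts, and the paper's key structural lemma is precisely this: using only \ref{p:indep} and \ref{p:bipart}, in any NAC-colouring some monochromatic component covers at least $n-8z$ vertices (where $z=n^{2/3}\log^{2/3}n$). Once you know one blue component $B$ is huge, the remaining red components $R_i$ meet $B$ in stable sets $S_i$, and the first-moment calculation is run on the small ``leftover'' data $(T_i,S_i)$ with $T_i=V(R_i)\setminus V(B)$, very much in the spirit of the stable-cut computation.

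\textbf{The triangle argument is circular.} You invoke ``the triangle condition'' inside the first-moment bound, but that bound is computed at $p=(1-\eta)\sqrt[3]{2\log n/n^2}$, strictly \emph{below} the $\mathcal T$ threshold, where a positive proportion of vertices are in no triangle. The paper never uses triangles in the first-moment step; triangles enter only at the end, via $\mathcal N=\mathcal N'\cap\mathcal T$, to kill the residual \emph{stable} NAC-colourings (any vertex of $S_c$ lies in no triangle).
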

	Recall that with high probability $G_t(n)$ is connected for $t=(1+o(1))\frac{\log n}{n}\ll \tau_{\mathcal T}$ \cite{ER59}, so we will only need to check whether there is a NAC-colouring.
	
	We say that a NAC-colouring $c$ of a graph $G$ is \textit{stable} if
	there is some stable set $S_c$ such that all edges meeting $S_c$ are one colour and all edges not meeting $S_c$ are the other colour.
	
	In some cases, a NAC-colouring can be stable in both ways: there are stable sets $S_c^{\text{red}}$ and $S_c^{\text{blue}}$ such that an edge is red if and only if it meets $S_c^{\text{red}}$ and blue if and only if it meets $S_c^{\text{blue}}$. In this case, $G$ is bipartite with one part being $S_c^{\text{red}}\cup S_c^{\text{blue}}$. Bipartite graphs have many other stable NAC-colourings, as shown by the following observation.
	\begin{lemma}\label{lem:bipartite}Let $G$ be a bipartite graph, and let $S$ be any stable set that meets at least one edge but is not a vertex cover. Then the colouring defined by setting edges red if they meet $S$, and blue otherwise, is a stable NAC-colouring.\end{lemma}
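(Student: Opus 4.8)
The plan is to verify directly that the colouring $c$ described in the statement --- red on edges meeting $S$, blue on all others --- satisfies the three requirements in the definition of a NAC-colouring, namely that it is surjective and that no cycle of $G$ has exactly one red edge or exactly one blue edge; stability of $c$ is then immediate by construction, with witness $S_c=S$.

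Surjectivity is the easy part: a red edge exists because $S$ meets at least one edge, and a blue edge exists because $S$ is not a vertex cover, so some edge has neither endpoint in $S$. Next I would handle cycles with exactly one red edge. Suppose, for contradiction, that a cycle $C$ has a unique red edge $e=uv$. Since $e$ meets $S$ and $S$ is stable, exactly one endpoint of $e$, say $u$, lies in $S$; but then the other edge of $C$ incident to $u$ also meets $S$ and is therefore red, contradicting uniqueness. This step uses only that $S$ is stable, not bipartiteness.

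The remaining case --- no cycle with exactly one blue edge --- is the only place where bipartiteness enters, and is the crux of the argument. Suppose a cycle $C$ has a unique blue edge $e$. Since $G$ is bipartite, $C$ is even, of length $2m$ say. Deleting $e$ leaves a path $P = v_1 v_2 \cdots v_{2m}$ whose endpoints $v_1, v_{2m}$ are the ends of $e$ (hence $v_1, v_{2m} \notin S$), and all $2m-1$ edges of $P$ are red, i.e.\ meet $S$. Walking along $P$ from $v_1$: from $v_1 \notin S$ and $v_1 v_2$ meeting $S$ we deduce $v_2 \in S$; stability of $S$ then forces $v_3 \notin S$; iterating, $v_i \in S$ precisely when $i$ is even. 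As $2m$ is even, this forces $v_{2m} \in S$, contradicting $v_{2m} \notin S$. Hence $c$ is a NAC-colouring, and it is stable with $S_c = S$.

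I expect essentially no serious obstacle here: surjectivity and the red case are trivial, and the blue case is a two-line alternation argument. The only subtlety to be careful about is that the blue case really does need the ambient cycle to have even length --- which is exactly the role of the hypothesis that $G$ is bipartite --- while stability of $S$ is what drives the alternation in both cycle cases.
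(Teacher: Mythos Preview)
Your proof is correct, but the paper's argument is organised differently and more uniformly. Rather than treating the ``exactly one red'' and ``exactly one blue'' cases separately, the paper observes that in any cycle the number of red edges equals twice the number of cycle-vertices lying in $S$ (since $S$ is stable, each such vertex contributes exactly two red cycle-edges), so the red count is always even; bipartiteness then makes the cycle length even, whence the blue count is also even, and both forbidden configurations are ruled out at once. Your case split achieves the same conclusions---your red case is essentially the observation that each vertex of $S$ on the cycle contributes two red edges, and your blue case is an explicit alternation in lieu of the subtraction ``even minus even is even''---but the parity-counting formulation is shorter and makes clearer why bipartiteness is needed only once.
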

	\begin{proof}At least one edge meets $S$ and at least one does not, so both colours are used. Consider any cycle of $G$. Since $S$ is stable, the cycle contains twice as many red edges as it contains vertices of $S$, and in particular has an even number of red edges. Since $G$ is bipartite, the cycle must also have an even number of blue edges. Therefore there are no almost-monochromatic cycles, and this is a NAC-colouring; it is stable by definition. 
	\end{proof}

	Consider the property $\mathcal N'$ of being connected with no non-stable NAC-colouring. Unfortunately this property is not monotone. For example, all NAC-colourings of the graph with vertex set $\{v,w,x,y,z\}$ and edge set $\{vx,wx,xy,xz,yz\}$ are stable, but if the edge $vw$ is added there is only one NAC-colouring (up to swapping colours), which is not stable. However, we can show that $\mathcal N'$ is ``almost monotone'' in the sense that only adding edges that fit a certain pattern can violate monotonicity, as we make precise in the following lemma.
	
	\begin{lemma}\label{almost-monotone}Suppose $G$ is a spanning subgraph of $G'$, and $G\in\mathcal N'$ but $G'\not\in\mathcal N'$. Then there is a stable NAC-colouring $c$ of $G$ such that $|S_c|=2$ and $G'$ contains the edge between the vertices of $S_c$.\end{lemma}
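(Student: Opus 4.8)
The plan is to reduce to the situation of a single added edge, and then build the required colouring explicitly.

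\medskip
\noindent\emph{Reduction.} Write $E(G')\setminus E(G)=\{f_1,\dots,f_m\}$ and set $G=H_0\subsetneq H_1\subsetneq\dots\subsetneq H_m=G'$ with $H_i=H_{i-1}+f_i$, so that each $H_i$ is connected. Since $H_0\in\mathcal N'$ and $H_m\notin\mathcal N'$, there is a least index $j$ with $H_j\notin\mathcal N'$; put $\widehat G:=H_{j-1}\in\mathcal N'$ and $\widehat G+ab:=H_j\notin\mathcal N'$, where $ab=f_j\in E(G')$. It suffices to produce a stable NAC-colouring $\widehat c$ of $\widehat G$ with $S_{\widehat c}=\{a,b\}$. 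Indeed, $\{a,b\}$ is then a stable set of $G\subseteq\widehat G$; the restriction of $\widehat c$ to $E(G)$ is surjective, since its restriction to a spanning tree of $G$ cannot be monochromatic (by the argument used to show that $\mathcal N$ is monotone), and it inherits the NAC condition; and its red edges are exactly the edges of $G$ meeting $\{a,b\}$. So it is a stable NAC-colouring of $G$ with $S_c=\{a,b\}$ and $ab\in E(G')$, as required.

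\medskip
\noindent\emph{Set-up.} As $\widehat G+ab$ is connected and not in $\mathcal N'$, it has a non-stable NAC-colouring $c''$, and its restriction $c:=c''|_{\widehat G}$ is a NAC-colouring of $\widehat G$ (again by the monotonicity argument), hence stable. Fix a witnessing stable set $S$, with the red edges of $c$ being exactly the edges of $\widehat G$ meeting $S$; then $S\neq\emptyset$, $S$ is not a vertex cover of $\widehat G$, and -- since $c$ has no almost-monochromatic cycle -- every $s\in S$ has an independent neighbourhood in $\widehat G$. We use throughout that a surjective $2$-colouring is a NAC-colouring if and only if no blue edge has both ends in one connected component of the red subgraph and, symmetrically, no red edge has both ends in one component of the blue subgraph. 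Since $c''$ is not witnessed by $S$ in $\widehat G+ab$ (otherwise $c''$ would be stable) and $ab$ is the only edge outside $\widehat G$, exactly one of the following holds: (i) $a,b\in S$; (ii) exactly one of $a,b$ lies in $S$ and $c''(ab)$ is blue; (iii) $a,b\notin S$ and $c''(ab)$ is red.

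\medskip
\noindent\emph{The main case.} Suppose (i) holds, and let $\widehat c$ colour an edge of $\widehat G$ red precisely when it meets $\{a,b\}$. As $\{a,b\}\subseteq S$ and $S$ is not a vertex cover, $\{a,b\}$ is a stable set of $\widehat G$ that is not a vertex cover, so $\widehat c$ is surjective; it will then be a stable NAC-colouring with $S_{\widehat c}=\{a,b\}$ once we check it is a NAC-colouring. It has no cycle with exactly one red edge, since a red edge has an end $v\in\{a,b\}$ and the other cycle-edge at $v$ is red as well. Suppose it had a cycle with exactly one blue edge $xy$; then $x,y\notin\{a,b\}$ and the rest of the cycle is a path from $x$ to $y$ all of whose edges meet $\{a,b\}$. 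Such a path strictly alternates between vertices outside $\{a,b\}$ and vertices in $\{a,b\}$ (two consecutive vertices of $\{a,b\}$ would require the non-edge $ab$), so it has length $2$ or $4$. If the length is $2$, its middle vertex lies in $\{a,b\}$ and is adjacent to both $x$ and $y$, contradicting that its neighbourhood is independent. If the length is $4$, the path is $x,a,z,b,y$ (up to swapping $a$ and $b$) with $z\notin\{a,b\}$; since $a,b\in S$, the vertices $x$ and $y$ lie outside $S$, so $xa,az,zb,by$ are red and $xy$ is blue in $c$, hence also in $c''$. Then $x,a,z,b,y$ all lie in one component of the red subgraph of $c''$ while $xy$ is a blue edge with both ends there, contradicting that $c''$ is a NAC-colouring. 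So $\widehat c$ is the required stable NAC-colouring.

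\medskip
\noindent\emph{The remaining cases and the obstacle.} It remains to treat (ii) and (iii), and I expect this to be the crux. Here the colouring of the previous paragraph need not be a NAC-colouring, but the hypotheses force structure: as $c''$ is a NAC-colouring of $\widehat G+ab$, in case (iii) the set $S$ must be a stable cut of $\widehat G$ separating $a$ from $b$ (else $ab$ is a red edge inside a blue component) with no blue edge of $c$ between the red components of $a$ and $b$, and case (ii) is analogous with the colours partly interchanged. The key further ingredient is that \emph{every} NAC-colouring of $\widehat G$, not just $c$, is stable: for any component $C$ of $\widehat G-S$, the colouring making an edge red exactly when it meets $C$ is a NAC-colouring (no almost-monochromatic cycle, by the same alternation argument), hence stable, which pins down strong bipartiteness and minimality conditions on $\widehat G-S$ and on the neighbourhoods of the two sides. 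The intended conclusion is that with these constraints cases (ii) and (iii) cannot actually arise at the critical edge $f_j$ -- equivalently, the critical edge must have both ends in $S$, reducing everything to case (i); failing that, one passes to an inclusion-minimal stable cut on the relevant side and reads a size-$2$ witnessing stable set, whose associated edge lies in $G'$, directly off the stability of the corresponding ``one side versus the rest'' colouring.
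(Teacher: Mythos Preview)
Your reduction to a single added edge and your treatment of case~(i) are correct, and in fact more carefully justified than the paper's one-line handling of that case. The genuine gap is exactly where you flag it: cases~(ii) and~(iii). What you have written there is an outline, not a proof, and both suggested endings --- that these cases ``cannot actually arise'', or ``failing that'' that one can read off a size-$2$ witness from an inclusion-minimal cut --- are left entirely unsubstantiated.

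The paper does \emph{not} argue that (ii)/(iii) are impossible. Instead it shows that in these cases $\widehat G$ is forced to be bipartite, and then Lemma~\ref{lem:bipartite} applied with $S=\{a,b\}$ immediately yields the required stable NAC-colouring. The key auxiliary colouring is different from your ``edge red iff it meets the component $C$'' colouring. Writing $S'\subseteq S\setminus\{b\}$ for a stable cut of $\widehat G$ separating $a$ from $b$ (so $\widehat G=G_1\cup G_2$ with $V(G_1)\cap V(G_2)=S'$, $a\in V(G_1)$, $b\in V(G_2)$), the paper colours an edge of $\widehat G$ red iff it lies in $G_1$ \emph{and} meets $S'$. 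One checks this is a NAC-colouring; since $\widehat G\in\mathcal N'$ it is stable, and analysing its witnessing set forces $G_1$ to be bipartite with $S'$ contained in one part. The symmetric argument gives $G_2$ bipartite with $S'$ in one part, whence $\widehat G$ is bipartite. Your sketch gestures at ``strong bipartiteness'' but does not produce it: the stability of your colouring (red iff meets $C$) only constrains the boundary of $C$ in $S$, and does not by itself control the internal structure of either side in the way you need to reach the conclusion.
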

	\begin{proof}Suppose not, and take a counterexample where $|E(G')\setminus E(G)|$ is as small as possible. Suppose $|E(G')\setminus E(G)|\geq 2$, and let $G''$ be any graph with $E(G)\subset E(G'')\subset E(G')$. If $G''\not\in\mathcal N'$ then by minimality there is a stable NAC-colouring of $G$ with the required properties with respect to $G''$, but then since $E(G'')\subset E(G')$ it also has the required properties with respect to $G'$. Thus $G''\in \mathcal N'$ and there exists a stable NAC-colouring $c''$ of $G''$ with the required properties with respect to $G$. Let $c$ be the restriction of $c''$ to $E(G)$. Clearly $c$ still has no almost-monochromatic cycles. Since $c''$ is a NAC-colouring, it has no spanning monochromatic component. Therefore, since $G$ is connected, $c$ uses both colours. Without loss of generality, the red edges of $c''$ are precisely those meeting $x$ or $y$ for some $x,y$ with $xy\in E(G')\setminus E(G'')$. Thus also $xy\in E(G')\setminus E(G)$, and $c$ has the required properties.
		
		Thus we may assume $G=G'\setminus xy$ for some edge $xy$. Since $G$ is connected, $G'$ is connected and $xy$ is not a bridge. Since $G'\not\in \mathcal N'$, $G'$ has a non-stable NAC-colouring $c'$. Since $xy$ was not a bridge, $xy$ cannot be the only edge of its colour, and hence $c'$ induces a NAC-colouring $c$ of $G$. Since $G\in\mathcal N'$, $c$ is stable, and without loss of generality the red edges are precisely those that meet some set $S$ which is stable in $G$.
		
		Suppose $\{x,y\}\subseteq S$. Then we may recolour the edges meeting $S\setminus\{x,y\}$ blue to obtain a colouring with the required properties. Thus we may assume without loss of generality $x\not\in S$. Since $c'$ is not stable, either $y\in S$ and $c'$ colours $xy$ blue, or $y\not\in S$ and $c'$ colours $xy$ red. In either case, there exists an $x$-$y$ path in $G$ since $xy$ was not a bridge of $G'$, and any such path must meet $S\setminus\{y\}$ in order to avoid creating a cycle in $G'$ with exactly one red edge under $c'$. Thus $G$ is the union of two graphs $G_1,G_2$, which intersect on $S'\subseteq S\setminus\{y\}$, and which contain $x$ and $y$ respectively.
		
		Consider the colouring $c_1$ of $G$ in which an edge is red if it meets $S'$ and is in $G_1$. We first observe that this is a NAC-colouring of $G$. It clearly uses both colours. Consider any cycle in $G$. If it lies entirely in $G_2$, all its edges are blue. If it uses edges from both $G_1$ and $G_2$, it uses at least two red edges from $G_1$ and at least two blue edges from $G_2$. If it lies entirely in $G_1$, it uses an even number of red edges, so it remains to show it cannot have exactly one blue edge. However, it uses at least as many blue edges in $c_1$ as in $c$, and the difference between the two is twice the number of times the cycle reaches $S\setminus S'$, so even. Since the cycle is not almost-monochromatic under $c$, the claim follows. Now since $G\in\mathcal N'$, we know $c_1$ is stable. Thus there is a set $T$ meeting all edges of one colour, and no others. Clearly $T$ is disjoint from $S'$. If all edges of $G_1$ are red, then $G_1$ is bipartite, with $S'$ being in one part. Otherwise $T$ must meet all blue edges, and $c_1$ restricted to $G_1$ is a NAC-colouring. Since an edge of $G_1$ is red if and only if it meets $S'$, and blue if and only if it meets $T\cap V(G_1)$, we again conclude that $G_1$ is bipartite with $S'$ being in one part. Similarly $G_2$ is bipartite with $S'$ being in one part, from which it follows that $G$ is bipartite. Now Lemma \ref{lem:bipartite} with $S=\{x,y\}$ gives a NAC-colouring with the desired properties.\end{proof}
	
	We now argue that it is sufficient to show the following.
	
	\begin{lemma}\label{noothernac}
		There is some $\eta$ with $0<\eta<1/2$ having the following property. Fix $p=(1-\eta)\sqrt[3]{\frac{2\log n}{n^2}}$, and let $G_1,G_2$ be independent copies of $G(n,p)$. Then,  asymptotically almost surely, $G_1$ is connected and for every NAC-colouring $c$ of $G_1$:
		\begin{itemize}
			\item $c$ is stable; and
			\item if $|S_c|=2$ then the edge between the vertices of $S_c$ is not in $G_2$.
		\end{itemize}
	\end{lemma}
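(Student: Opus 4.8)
\section*{Proof proposal}

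The plan is to exploit the ``grid'' structure of NAC-colourings. Given a NAC-colouring $c$ of a graph $G$, let $\mathcal P_1$ be the partition of $V(G)$ into the vertex sets of the components of the blue subgraph, and $\mathcal P_2$ the analogous partition for the red subgraph. A short argument shows that an edge is blue if and only if its endpoints lie in a common part of $\mathcal P_1$, and red if and only if they lie in a common part of $\mathcal P_2$; hence no edge lies inside a part of both partitions and, the fact we will lean on, $N(v)\subseteq P_1(v)\cup P_2(v)$ for every vertex $v$, where $P_i(v)$ is the part of $\mathcal P_i$ containing $v$. One also checks that $c$ is stable exactly when one of the two partitions has a single non-singleton part whose complement (the union of the singleton parts) is a stable set; here it is useful to observe that any stable set witnessing stability of a NAC-colouring consists entirely of vertices lying in no triangle, of which $G_1$ whp has only $n^{1-(1-\eta)^3+o(1)}$, so such a set has $o(np)$ vertices and removing it from $G_1$ leaves a connected graph (using that $G(n,p)$ is $\delta$-connected whp).

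With this reformulation, the task is to rule out, whp, every NAC-colouring of $G_1$ in which both partitions fail the displayed condition. I would first record that $\delta(G_1)\geq(1-o(1))np$ whp, so $|P_1(v)|+|P_2(v)|\geq(1-o(1))np$ for all $v$; hence every vertex lies in a part of size $\geq np/3$ in at least one partition, and since the parts of a given partition are disjoint there are at most $3/p$ such ``large'' parts in each. The heart of the argument is to show that whp no NAC-colouring of $G_1$ has two or more large parts in both $\mathcal P_1$ and $\mathcal P_2$: if $C_1,C_2$ are distinct large parts of $\mathcal P_1$ and $D_1,D_2$ distinct large parts of $\mathcal P_2$, then $N(v)\subseteq P_1(v)\cup P_2(v)$ forces each $C_i\cap D_j$ to be a stable set and forbids all edges between $C_1\cap D_1$ and $C_2\cap D_2$ and between $C_1\cap D_2$ and $C_2\cap D_1$; tracing the connectivity of $G_1$ through the resulting cells then pins down a sparse separating structure whose presence can be excluded by a first-moment estimate in the style of Lemma~\ref{lem:nobiggercut}, using that every stable set of $G_1$ has only $n^{2/3+o(1)}$ vertices. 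I expect this union bound to be the main obstacle, the difficulty being to isolate a bounded-size certificate for which the enumeration converges.

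Once at most one of $\mathcal P_1,\mathcal P_2$, say $\mathcal P_1$, has a large part, two things remain: to exclude small non-singleton parts of $\mathcal P_1$, and to verify that the complement of its unique non-singleton part $C^{*}$ is stable. A non-singleton blue part $C$ of size in $[2,np/3)$, together with the large red parts its vertices lie in, again produces a configuration ruled out by a first-moment bound of the same flavour; and a red edge $uu'$ with $u,u'\in V\setminus C^{*}$ would be joined, via $u$ and $u'$, by a red path $w\,u\,u'\,w'$ for many pairs $w\in N(u)\cap C^{*}$, $w'\in N(u')\cap C^{*}$, so the NAC condition would force $G_1$ to have no edges between these two large sets unless most neighbours of $u$ or $u'$ lie outside $C^{*}$, which returns us to the earlier configurations. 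A union bound over these cases shows $V\setminus C^{*}$ is stable, so whp every NAC-colouring of $G_1$ is stable.

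Finally, for the second bullet: if $c$ is stable with $|S_c|=2$, say $S_c=\{x,y\}$, then $\{x,y\}$ is stable and, as above, $x$ and $y$ lie in no triangle of $G_1$. So it suffices to show that whp no two vertices of $G_1$ that lie in no triangle are joined by an edge of $G_2$. Since $\mathbb E[\#\{v:v\text{ lies in no triangle of }G_1\}]=n^{1-(1-\eta)^3+o(1)}$, Markov's inequality gives at most $n^{1-(1-\eta)^3+o(1)}$ such vertices whp, hence at most $n^{2(1-(1-\eta)^3)+o(1)}$ pairs; as $G_2$ is independent of $G_1$, the expected number of these pairs that are edges of $G_2$ is at most $p\cdot n^{2(1-(1-\eta)^3)+o(1)}=n^{2(1-(1-\eta)^3)-2/3+o(1)}\to 0$ provided $(1-\eta)^3>2/3$. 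Choosing $\eta$ small enough for this and for all the estimates above completes the proof.
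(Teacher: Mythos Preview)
Your outline contains a genuine strategy and your treatment of the second bullet is correct and cleaner than the paper's (which folds that case into the main union bound): the observation that any $S_c$ consists of triangle-free vertices, together with the Markov bound on their number and independence of $G_2$, really does give the result for any $\eta$ with $(1-\eta)^3>2/3$.

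However, the first bullet has real gaps. You yourself flag the main one: the ``heart of the argument''---ruling out two large parts in each of $\mathcal P_1$ and $\mathcal P_2$---is only asserted to follow from a first-moment bound ``in the style of Lemma~\ref{lem:nobiggercut}'', without identifying the certificate or checking that the enumeration converges. This is exactly the step that the paper does not attempt head-on; instead it proves a structural lemma (Lemma~\ref{onebigcomp}) that in \emph{any} graph satisfying \ref{p:indep} and \ref{p:bipart}, one monochromatic component already covers $n-8z$ vertices. That lemma needs a nontrivial combinatorial argument (ordering the red components by the ratio $|V(R)\cap V(\mathcal B_1)|/|V(R)\cap V(\mathcal B_2)|$ and choosing a prefix), and I do not see how your grid picture with the four stable cells $C_i\cap D_j$ produces a certificate whose count you can control without an idea of comparable strength.

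There is a second, more concrete gap in your argument that $V\setminus C^*$ is stable. You deduce that for an edge $uu'\subseteq V\setminus C^*$ there are no edges between $N(u)\cap C^*$ and $N(u')\cap C^*$, and then appeal to these being ``two large sets''. But $|N(u)\cap C^*|\le \deg(u)\approx np\asymp n^{1/3}(\log n)^{1/3}$, whereas the property you have available (\ref{p:bipart}) only forbids empty bipartite pairs of size $z=n^{2/3}\log^{2/3}n$. Two disjoint sets of size $np$ can easily have no edge between them in $G(n,p)$ (the expected number of edges is $n^2p^3\asymp \log n$, and a union bound over $n^2$ pairs $u,u'$ does not close). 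So this step does not go through as written; nor can you fall back on ``vertices of $V\setminus C^*$ are triangle-free'', since a blue singleton can lie in an all-red triangle with two other blue singletons.

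The paper sidesteps both issues by a different mechanism: it takes a NAC-colouring with the \emph{fewest} monochromatic components, uses Lemma~\ref{onebigcomp} to get a giant blue component $B$, and then shows via a recolouring argument that minimality forces $G_1-V(B)$ to be connected (or to be two nonadjacent vertices joined in $G_2$). This pins down a very concrete family of configurations---stable sets $S_i$ separating off pieces $T_i$ with $\sum|T_i|\le 8z$ and $G_1[\bigcup T_i]$ connected---for which a carefully organised first-moment bound (Claim~\ref{Qclaim}) succeeds. Your grid framework is a reasonable starting point, but to make it a proof you would need either an analogue of Lemma~\ref{onebigcomp} or a direct certificate-and-count argument that does the work of Claim~\ref{Qclaim}; at present both are missing.
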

	
	\begin{proof}[Proof of Theorem \ref{thm:nacthreshold}]
		Clearly $\mathcal N\subseteq \mathcal N'$. If $G\in \mathcal N'\setminus\mathcal N$ then $G$ has a stable NAC-colouring $c$, and consequently any vertex of $S_c$ is in no triangle. Hence $\mathcal N=\mathcal N'\cap \mathcal T$, and so $\tau_{\mathcal N}\geq \tau_{\mathcal T}$.
		
		Now let $p$, $G_1$ and $G_2$ be as defined in Lemma \ref{noothernac}, and let $T_1=e(G_1)$ and $T_2=e(G_1\cup G_2)$. We have $T_1\sim\operatorname{Bin}\bigl(\binom{n}{2},p\bigr)$ and $T_2\sim\operatorname{Bin}\bigl(\binom{n}{2},2p-p^2\bigr)$, so Lemma \ref{triangle-threshold} implies that asymptotically almost surely $T_1<\tau_{\mathcal T}<T_2$; assume henceforth that this holds. 
		
		Since $(G_{T_1}(n),G_{T_2}(n))\sim(G_1,G_1\cup G_2)$, asymptotically almost surely $G_{T_1}(n)\in\mathcal N'$ by Lemma \ref{noothernac}. By Lemma \ref{almost-monotone}, if $G_{\tau_{\mathcal T}}(n)\not\in \mathcal N'$ then there exists a stable NAC-colouring $c$ of $G_{T_1}(n)$ with $|S_c|=2$ and an edge between the vertices of $S_c$ in $G_{\tau_{\mathcal T}}(n)$, and hence in $G_{T_2}(n)$. But by Lemma \ref{noothernac}, asymptotically almost surely no such $c$ exists.
		
		Thus $G_{\tau_{\mathcal T}}(n)\in\mathcal N'\cap \mathcal T=\mathcal N$, giving $\tau_{\mathcal N}\leq \tau_{\mathcal T}$, asymptotically almost surely.
	\end{proof}

	We now work towards the proof of Lemma \ref{noothernac}. We first mention some simple properties of $G(n,p)$. For $p$ as in Lemma \ref{noothernac}, the largest stable set of $G$ asymptotically almost surely has $(2-o(1))p^{-1}\log np<n^{2/3}\log^{2/3}n$ vertices (e.g.\ see a more precise result in \cite{Frieze}). The upper bound (which is all we shall require) is straightforward, and we can similarly show that any two sets of this size have an edge between.
	
	\begin{lemma}For any $p=p(n)$ satisfying $np\to\infty$, with high probability there are no two disjoint sets of size at least $2p^{-1}\log np$ having no edges between them in $G(n,p)$.
	\end{lemma}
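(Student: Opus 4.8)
The plan is a direct first-moment argument. Write $s=\lceil 2p^{-1}\log(np)\rceil$. Since any two disjoint sets of size at least $s$ contain two disjoint sets of size exactly $s$, and since no two disjoint $s$-sets exist at all unless $2s\le n$, it suffices to show that asymptotically almost surely every pair of disjoint $s$-sets has an edge between them, and we may assume $2s\le n$ (and, as $np\to\infty$, that $n$ is large enough that $s\geq 1$).

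I would then bound the expected number of ``bad'' pairs. For a fixed pair of disjoint $s$-sets $A,B$, the $s^2$ potential edges joining $A$ to $B$ are distinct and independent, so $\Pr[\text{no edge between }A\text{ and }B]=(1-p)^{s^2}\le e^{-ps^2}$. There are at most $\binom ns^2\le (en/s)^{2s}$ such pairs, so a union bound gives that the probability that a bad pair exists is at most
\[
\Bigl(\tfrac{en}{s}\Bigr)^{2s}e^{-ps^2}=\exp\Bigl(2s\log\tfrac{en}{s}-ps^2\Bigr).
\]
Using $ps\ge 2\log(np)$ from the definition of $s$, the exponent is at most $2s\bigl(\log\tfrac{en}{s}-\log(np)\bigr)=2s\log\tfrac{e}{sp}$. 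Since $np\to\infty$ we have $sp\ge 2\log(np)\to\infty$, so $\log\tfrac{e}{sp}\to-\infty$; as $s\ge 1$ this forces the whole exponent to $-\infty$ (indeed it is at most $2\log\tfrac{e}{2\log(np)}$, so the probability is $O((\log np)^{-2})=o(1)$), which completes the argument.

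The computation is routine, but there is one point that requires care: the binomial coefficient must be estimated by $\binom ns\le(en/s)^s$, not by the cruder $n^s$. With $ps\approx 2\log(np)=2\log n+2\log p$, the crude bound yields an exponent $\approx 2s\log n-ps^2\approx-2s\log p$, which is positive and diverges whenever $p$ is bounded away from $0$; the factor $s^{-s}$ in the sharper estimate contributes exactly the $-\log s$ needed to convert this into $2s\log\tfrac{e}{sp}$, which is negative precisely because $sp$ is large. I expect this to be the only delicate point.
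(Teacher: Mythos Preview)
Your proof is correct and essentially identical to the paper's: both set $k=\lceil 2p^{-1}\log(np)\rceil$, bound the number of bad pairs by $\binom{n}{k}^2(1-p)^{k^2}\le (en/k)^{2k}e^{-pk^2}$, and use $pk\ge 2\log(np)$ to show this is at most $(e/\log(np))^{2k}=o(1)$. Your additional remarks (the reduction to sets of size exactly $k$, and the warning that the cruder bound $\binom nk\le n^k$ would fail) are accurate and do not alter the argument.
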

	\begin{proof}
		Write $k=\lceil 2p^{-1}\log np\rceil$. Since there are $\binom{n}{k}\binom{n-k}{k}\leq \binom{n}{k}^2$ choices of the two sets, each with $k^2$ potential edges between, the probability of two such $k$-sets existing is at most 
		\[\binom{n}{k}^2(1-p)^{k^2}\leq \left(\frac{ne}{k}\exp(-pk/2)\right)^{2k}\leq (e/\log np)^{2k},\]
		again using $(ne/k)^k \geq \binom{n}{k}$.
		Since $np\to\infty$, this is $o(1)$, as required.
	\end{proof}
	
	Thus we have the following.
	\begin{corollary}\label{p1p2}
		For $p$ as in Lemma \ref{noothernac}, setting $z=n^{2/3}\log^{2/3}n$, asymptotically almost surely in $G(n,p)$:
		\begin{enumerate}[label=P\arabic*]
			\item\label{p:indep} there is no stable set of size $z$, and
			\item\label{p:bipart} there are no two disjoint sets of size $z$ having no edges between them. 
		\end{enumerate}
	\end{corollary}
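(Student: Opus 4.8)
The plan is to read both conclusions off results already established: statement \ref{p:bipart} is essentially the preceding lemma specialised to the present $p$, and statement \ref{p:indep} is a routine first-moment bound (equivalently, a citation of the classical estimate for the independence number of $G(n,p)$). Throughout, the key arithmetic is that with $p=(1-\eta)\sqrt[3]{2\log n/n^2}$ and $z=n^{2/3}\log^{2/3}n$ one has $pz=(1-\eta)2^{1/3}\log n$ and $\log(n/z)=\tfrac13\log n\,(1+o(1))$, and similarly $\log(np)=\tfrac13\log n\,(1+o(1))$ since $np=(1-\eta)(2n\log n)^{1/3}$.

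For \ref{p:indep}, I would bound the expected number of stable sets of size $z$ by $\binom nz(1-p)^{\binom z2}\le\exp\!\bigl(z\log(ne/z)-p\binom z2\bigr)$. By the above, the exponent is $z\log n\bigl(\tfrac13-\tfrac12(1-\eta)2^{1/3}+o(1)\bigr)$, which is at most $-cz\log n$ for some $c>0$ as soon as $(1-\eta)2^{1/3}>\tfrac23$, that is, $\eta<1-\tfrac2{3\cdot 2^{1/3}}\approx 0.47$. Hence, for such $\eta$, asymptotically almost surely no stable set of size $z$ exists. Alternatively one invokes the standard fact (e.g.\ \cite{Frieze}) that asymptotically almost surely $\alpha(G(n,p))\le(2+o(1))p^{-1}\log(np)$, and notes that the right-hand side equals $\tfrac{2}{3\cdot 2^{1/3}(1-\eta)}n^{2/3}\log^{2/3}n\,(1+o(1))<z$ for $\eta$ small enough.

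For \ref{p:bipart}, I would apply the preceding lemma with $k=\lceil 2p^{-1}\log(np)\rceil$: it gives that asymptotically almost surely $G(n,p)$ has no two disjoint $k$-sets with no edges between them, and any two disjoint sets of size at least $k$ contain such a pair of $k$-sets. So it suffices to verify $z\ge 2p^{-1}\log(np)$; but $2p^{-1}\log(np)=\tfrac{2}{3\cdot 2^{1/3}(1-\eta)}n^{2/3}\log^{2/3}n\,(1+o(1))$, which lies below $z$ for $\eta$ below the same threshold $\approx 0.47$ and $n$ large. A union bound over \ref{p:indep} and \ref{p:bipart} completes the proof.

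The computations are entirely routine and there is no real obstacle; the only point to watch is the choice of $\eta$. It must be small enough that the constant $\tfrac{2}{3\cdot 2^{1/3}(1-\eta)}$ stays strictly below $1$, so that $z$ genuinely dominates both $2p^{-1}\log(np)$ and the first-moment threshold. This is a mild extra constraint — roughly $\eta<0.47$ rather than the $\eta<1/2$ of Lemma~\ref{noothernac} — and since we are always free to shrink $\eta$ it costs nothing.
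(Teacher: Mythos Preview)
Your proposal is correct and follows the same approach as the paper: the paper cites the standard independence-number bound (attributed to \cite{Frieze}) for \ref{p:indep} and invokes the immediately preceding lemma for \ref{p:bipart}, then simply writes ``Thus we have the following'' before stating the corollary. Your explicit verification that $2p^{-1}\log(np)<z$ requires $\eta$ below roughly $0.47$ is a useful detail the paper leaves implicit in its inequality $(2-o(1))p^{-1}\log np<n^{2/3}\log^{2/3}n$, but this is elaboration rather than a different route.
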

	
	We next give a structural result on NAC-colourings of graphs satisfying these properties. For a NAC-colouring, we define a \textit{monochromatic component} to be a maximal monochromatic connected subgraph, and a \textit{red component} to be a monochromatic component all of whose edges are red (and likewise for blue). Note that if two monochromatic components intersect, they must be of opposite colours, and consequently their intersection is a stable set, since otherwise an edge in the intersection together with a path in the opposite-coloured component contradicts the colouring being a NAC-colouring.
	\begin{lemma}\label{onebigcomp}Let $G$ be any graph satisfying properties \ref{p:indep} and \ref{p:bipart} for some $z$. Then in any NAC-colouring of $G$, some monochromatic component contains at least $n-8z$ vertices.
	\end{lemma}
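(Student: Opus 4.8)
The plan is to assume, for a contradiction, that every monochromatic component of a given NAC-colouring $c$ of $G$ has fewer than $n-8z$ vertices, and then to produce two disjoint sets of at least $z$ vertices with no edge of $G$ between them, contradicting property \ref{p:bipart}. (Note $n>8z$, as otherwise there is nothing to prove.) The organising device is the \emph{component grid} of $c$: let the \emph{red components} be the vertex sets of the connected components of the spanning subgraph of $G$ whose edges are exactly the red edges — so that a vertex on no red edge forms its own singleton red component — and define the \emph{blue components} dually. Then every vertex lies in exactly one red component and exactly one blue component, and this agrees, for genuine components, with the terminology above.

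The only structural input required is: if $v$ lies in red component $R$ and blue component $B$, and $v'$ lies in red component $R'\neq R$ and blue component $B'\neq B$, then $vv'\notin E(G)$. Indeed an edge $vv'$ is monochromatic; if red, it would place $v$ and $v'$ in a common red component, forcing $R=R'$, and if blue it would force $B=B'$. (Property \ref{p:indep} additionally gives that each intersection $V(R)\cap V(B)$ is a stable set and so has fewer than $z$ vertices, but this is not actually needed below.)

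Now I balance. Since every red component has at most $n-8z-1\le n-2z-1$ vertices and $n\ge 8z+1$, a greedy argument partitions the red components into two classes $\mathcal A_1,\mathcal A_2$ with $|\bigcup_{R\in\mathcal A_k}V(R)|\ge 2z+1$ for $k=1,2$: if some red component already has at least $2z+1$ vertices, take it alone as $\mathcal A_1$ (then $\mathcal A_2$ has at least $8z+1$ vertices); otherwise every red component has at most $2z$ vertices, and one adds them to $\mathcal A_1$ one at a time until the bound is reached, overshooting by at most $2z$, so that $\mathcal A_2$ retains at least $n-4z\ge 2z+1$ vertices. Partition the blue components into $\mathcal B_1,\mathcal B_2$ in the same way. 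For $k,\ell\in\{1,2\}$ let $X_{k\ell}$ be the set of vertices whose red component is in $\mathcal A_k$ and whose blue component is in $\mathcal B_\ell$; these four sets partition $V(G)$, each row-total $|X_{k1}|+|X_{k2}|$ is at least $2z+1$, and each column-total $|X_{1\ell}|+|X_{2\ell}|$ is at least $2z+1$. I claim that one of the diagonal pairs $\{X_{11},X_{22}\}$, $\{X_{12},X_{21}\}$ has both members of size at least $z$: otherwise, after swapping $\mathcal A_1\leftrightarrow\mathcal A_2$ and $\mathcal B_1\leftrightarrow\mathcal B_2$ if necessary, $|X_{11}|<z$, and also $|X_{12}|<z$ or $|X_{21}|<z$, whence either the first row-total or the first column-total is smaller than $2z$ — a contradiction. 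Such a diagonal pair consists of disjoint sets (a vertex's red component lies in only one of $\mathcal A_1,\mathcal A_2$), each of size at least $z$, and by the structural input there is no $G$-edge between its two parts; this contradicts \ref{p:bipart}.

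I expect the main work here to be conceptual rather than computational: the load-bearing idea is that once one balances \emph{both} the red-component classes and the blue-component classes, a pigeonhole on the $2\times 2$ array $(X_{k\ell})$ is forced to deliver a large ``diagonal'' pair, and that the component-grid non-adjacency statement is exactly what forbids edges across such a pair. The remaining points — setting up singleton components so that each vertex lies in precisely one red and one blue component, and checking that the greedy partition succeeds for every $n>8z$ (the constant $8$ is inessential; any fixed constant exceeding $2$ would do) — are routine bookkeeping.
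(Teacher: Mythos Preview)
Your proof is correct and takes a genuinely different route from the paper's.

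The paper argues in stages. First it uses \ref{p:indep} to rule out having both a red and a blue component of size at least $2z$: their intersection is stable, hence of size less than $z$, and then $V(R)\setminus V(B)$ and $V(B)\setminus V(R)$ violate \ref{p:bipart}. Assuming without loss of generality that all red components are small, it next considers a balanced bipartition $\mathcal B_1,\mathcal B_2$ of the blue components and uses a ratio-ordering trick on the red components (order them by $|V(R)\cap V(\mathcal B_1)|/|V(R)\cap V(\mathcal B_2)|$ and cut at the right place) to produce two sets $V(\mathcal B_i)\setminus V(\mathcal R_i)$ of size at least $z$ with no edge between them. Finally it handles separately the vertices lying in no blue component.

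Your component-grid argument is more symmetric and sidesteps all of this. By declaring singleton components so that every vertex has a unique red and a unique blue component, you can balance \emph{both} colour partitions at once and then apply a clean $2\times 2$ pigeonhole to the array $(X_{k\ell})$; the ``diagonal'' non-adjacency is exactly the structural input. This dispenses with the ratio-ordering step and the separate treatment of vertices outside all blue components. Notably, your proof never invokes \ref{p:indep} at all---it uses only \ref{p:bipart}---so it actually establishes a slightly stronger statement than the lemma claims. The constant $8$ is, as you say, slack in your argument; in the paper's proof the $4z+4z$ genuinely arises from the two stages.
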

	\begin{proof}Suppose there is a red component $R$ covering at least $2z$ vertices and a blue component $B$ covering at least $2z$ vertices. Then $V(R)\cap V(B)$ is a stable set, so $|V(R)\cap V(B)|<z$. There are no edges between $V(R)\setminus V(B)$ and $V(B)\setminus V(R)$, since there is no red edge leaving $V(R)$ and no blue edge leaving $V(B)$. Thus 
		\[\min\{|V(R)\setminus V(B)|,|V(B)\setminus V(R)|\}=\min\{|V(R)|,|V(B)|\}-|V(R)\cap V(B)|>z,\] 
		a contradiction of \ref{p:bipart}.
		Thus, without loss of generality, we may assume no red component has size at least $2z$. 
		
		For convenience, if $\mathcal C$ is a collection of monochromatic components, we use $V(\mathcal C)$ to denote $\bigcup_{C\in \mathcal C}V(C)$.
		Suppose we can partition the blue components into sets $\mathcal{B}_1,\mathcal{B}_2$, such that $|V(\mathcal B_1)|\geq|V(\mathcal B_2)|\geq 4z$. Then we claim that we can partition the red components into $\mathcal{R}_1,\mathcal{R}_2$ such that $|V(\mathcal B_i)\setminus V(\mathcal R_i)|\geq z$ for each $i$. This will give a contradiction, since there is no edge between the two sets in either colour. To justify the claim, order the red components by $|V(R)\cap V(\mathcal B_1)|/|V(R)\cap V(\mathcal B_2)|$. Put the first $a$ red components in this ordering (that is, those with smallest ratios) into $\mathcal R_1$ and the remainder into $\mathcal R_2$. This ensures that 
		\begin{equation}\label{eqn:ratios}
			\frac{|V(\mathcal R_1)\cap V(\mathcal B_1)|}{|V(\mathcal R_1)\cap V(\mathcal B_2)|}\leq\frac{|V(\mathcal R_2)\cap V(\mathcal B_1)|}{|V(\mathcal R_2)\cap V(\mathcal B_2)|},
		\end{equation}
		from which we obtain
		\begin{align*}
			\frac{|V(\mathcal R_1)\cap V(\mathcal B_1)|}{|V(\mathcal B_1)|}&\leq\frac{|V(\mathcal R_1)\cap V(\mathcal B_1)|}{|V(\mathcal R_1\cup\mathcal R_2)\cap V(\mathcal B_1)|}\\
			&\leq\frac{|V(\mathcal R_1)\cap V(\mathcal B_2)|}{|V(\mathcal R_1\cup\mathcal R_2)\cap V(\mathcal B_2)|}\\
			&\leq 1-\frac{|V(\mathcal R_2)\cap V(\mathcal B_2)|}{|V(\mathcal B_2)|},
		\end{align*}
		where the first and third inequalities follow from the fact $V(\mathcal R_1)\cap V(\mathcal B_i)$ and $V(\mathcal R_2)\cap V(\mathcal B_i)$ are disjoint for $i\in[2]$, and the second inequality follows from \eqref{eqn:ratios}.
		
		Choose $a$ maximal such that $|V(\mathcal R_1)\cap V(\mathcal B_1)|\leq |V(\mathcal B_1)|/2+z$. Since incrementing $a$ changes $|V(\mathcal R_1)\cap V(\mathcal B_1)|$ by at most the order of some red component, which is at most $2z$, we must have $|V(\mathcal R_1)\cap V(\mathcal B_1)|\geq|V(\mathcal B_1)|/2-z$, and thus 
		\[|V(\mathcal R_2)\cap V(\mathcal B_2)|\leq \frac{|V(\mathcal B_2)|}{2}+z\frac{|V(\mathcal B_2)|}{|V(\mathcal B_1)|}\leq\frac{|V(\mathcal B_2)|}{2}+z.\]
		Thus $|V(\mathcal B_i)\setminus V(\mathcal R_i)|\geq 2z-z$ for each $i$, proving the claim.
		
		It follows that there is no way to partition the blue component so that each part covers at least $4z$ vertices, and hence that at most $4z$ vertices are in blue components other than the largest one.
		
		To complete the proof, we argue that at most $4z$ vertices are not in any blue component. Suppose this is not the case, and let $X$ be these vertices. Choose a minimal collection of red components that cover at least $z$ vertices in $X$, and let $Y$ be the set of vertices in these components. Since each red component covers at most $2z$ vertices, $|Y|\leq 3z$. But then $Y$ and $X\setminus Y$ are sets of size at least $z$ with no edge between, contradicting \ref{p:bipart}.
	\end{proof}
	
	We are now ready to prove the main result.
	\begin{proof}[Proof of Lemma \ref{noothernac}]
		Recall that $G_1$ is asymptotically almost surely connected. Suppose that $G_1$ has at least one NAC-colouring $c$ which is either non-stable or is stable with $|S_c|=2$ and an edge between the vertices of $S_c$ in $G_2$.  Choose such a colouring minimising the total number of monochromatic components. 
		
		By \Cref{lem:nobiggercut}, we may assume that every stable cut of $G_1$ separates it into two components, one of which is a singleton.
		
		By \Cref{onebigcomp} and \Cref{p1p2}, without loss of generality, some blue component $B$ covers at least $n-8z$ vertices of $G_1$, where $z=n^{2/3}\log^{2/3}n$. Suppose $G_1-V(B)$ is disconnected. Then choose some (connected) component $C$ of $G_1-V(B)$, and recolour all edges not meeting $C$ blue. Any cycle including a recoloured edge is either entirely blue or reaches $C$. In the latter case, since all edges leaving $C$ go to $B$, and are therefore red, the cycle has at least two red edges; since it also meets some vertex outside $V(C)\cup V(B)$, it has at least two blue edges. Thus this modified colouring is still a NAC-colouring. However it has fewer monochromatic components, since all monochromatic components meeting or containing recoloured edges have been merged with $B$. By our original choice of NAC-colouring, the modified colouring must be stable for every choice of $C$. It follows that $G_1-V(B)$ is either connected or consists of exactly two non-adjacent vertices, and in the latter case there is an edge between these vertices in $G_2$.
		
		Number the red components meeting $B$ as $R_1,\ldots,R_\ell$, and for each $i$ write $S_i=V(R_i)\cap V(B)$ and $T_i=V(R_i)\setminus V(B)$. Then each $S_i$ is a stable set which separates $T_i$ from $V(B)\setminus S_i$ (and we may therefore assume $|S_i|\leq z$ by \ref{p:indep}). Furthermore, every vertex in $S_i$ is adjacent to some vertex of $T_i$, since it meets an edge of $R_i$. We may assume that the sets $T_i$ partition $V(G)\setminus V(B)$, since any leftover vertices may be added arbitrarily to any of the $T_i$. With this assumption, each vertex in $T_i$ has no neighbours outside $S_i\cup\bigcup_{j}T_j$. Note that $\ell\geq 2$, since otherwise the colouring being non-stable means $|T_1|>1$ and hence $S_1$ is a stable cut separating $G$ into two non-singleton components, contradicting \Cref{lem:nobiggercut}. Furthermore, either $G_1[\bigcup_jT_j]$ is connected or $\ell=2$, $|T_1|=|T_2|=1$ and $G_2[T_1\cup T_2]$ is connected.
		
		We now bound the probability of a system of sets with these properties existing. Write $t_i=|T_i|$ and $t=\sum_it_i$, and let $\ell$ and $t_1\geq \cdots\geq t_\ell$ be fixed (reordering if necessary). For convenience we write $\mathbf{t}=(t_1,\ldots,t_\ell)$. Let $k$ be the number of $t_i$ which equal $1$ (so that $t_i\geq 2$ if and only if $i\leq \ell-k$). Fix an ordering of the vertices, and assume the $k$ singleton sets are numbered consistently with this ordering. 
		
		First consider the number of ways to choose the sets $(T_i,S_i)_{i\leq \ell}$. This is at most
		\[\left(\prod_{i=1}^{\ell-k}\binom{n}{t_i}\sum_{s=1}^z\binom{n}{s}\right)\times\binom{n}{k}\left(\sum_{s=1}^z\binom{n}{s}\right)^k,\]
		where the variable $s$ corresponds to the size of the set $S_i$ being chosen. Note that the probability of every vertex in $S_i$ being adjacent to some vertex in $T_i$ is 
		\[\left(1-(1-p)^{t_i}\right)^{|S_i|}\leq (pt_i)^{|S_i|},\] 
		whereas the probability of $T_i$ being non-adjacent to $V(B)\setminus S_i$ is 
		\[(1-p)^{t_i(n-t-|S_i|)}\leq \exp(-pt_i(n-t-|S_i|)).\]
		Likewise the probability of $S_i$ being stable is at most $\exp\bigl(-p\binom {|S_i|}2\bigr)$.
		
		Taking these facts into account, the expected number of choices for which these properties are satisfied, $Q(\mathbf{t})$, is at most
		\[\left(\prod_{i=1}^{\ell}\sum_{s=1}^z\binom{n}{s}(pt_i)^se^{-pt_i(n-s-t)-p\binom{s}{2}}\right)\left(\prod_{i=1}^{\ell-k}\binom n{t_i}\right)\binom{n}{k}.
		\]
		\begin{claim}\label{Qclaim}$Q(\mathbf{t})\leq n^{-(1-10\varepsilon)t}\binom nk$, provided $n$ is sufficiently large (in terms of $\varepsilon$).\end{claim}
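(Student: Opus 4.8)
The plan is to bound each of the $\ell$ inner sums
$\Sigma_i:=\sum_{s=1}^z\binom ns(pt_i)^s e^{-pt_i(n-s-t)-p\binom s2}$ appearing in $Q(\mathbf t)$ separately, and then reassemble the product together with $\prod_{i=1}^{\ell-k}\binom n{t_i}$ and $\binom nk$. For the single-factor estimate I would first pull $e^{-pt_i(n-t)}$ out of the sum, writing $\Sigma_i=e^{-pt_i(n-t)}\sum_{s=1}^z\binom ns(pt_i)^s e^{pt_is-p\binom s2}$, and observe that the leftover exponent $pt_is-p\binom s2=\tfrac p2 s(2t_i+1-s)$ is at most $\tfrac p2(t_i+\tfrac12)^2$ and is nonpositive once $s\ge 2t_i+1$, so it only boosts small $s$, while $\binom ns(pt_i)^s\le(enpt_i/s)^s$ is maximised near $s=npt_i$. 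Locating the peak of the summand and bounding the sum by roughly (its width)$\times$(its maximum), the dominant contributions are: a factor at most $\exp(npt_i)$ from $\binom n{npt_i}(pt_i)^{npt_i}$, which is exactly cancelled by the $\exp(-pt_in)$ in $e^{-pt_i(n-t)}$ since $n-t\ge n-8z=(1-o(1))n$ by \Cref{onebigcomp}; a factor $\exp\!\big(-p\binom{npt_i}2\big)=n^{-(1-o(1))(1-\eta)^3t_i^2}$, using $n^2p^3=2(1-\eta)^3\log n$; and only polynomially-sized corrections, using $\log n=o(np)$ throughout and the cap $s\le z$ from property~\ref{p:indep}. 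The aim is a bound of the shape $\Sigma_i\le n^{-c_it_i}$ with $c_i\ge 1$, and $c_i\ge 2-o(1)$ once $t_i\ge2$, for $n$ large in terms of $\varepsilon$ and with $\eta$ chosen small in terms of $\varepsilon$.

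Granting such per-factor bounds, I would assemble $Q(\mathbf t)$ as follows. Since each singleton contributes $t_i=1$ and there are $k$ of them, $\sum_{i\le\ell-k}t_i=t-k$, so $\prod_{i=1}^{\ell-k}\binom n{t_i}\le n^{t-k}$, while $\binom nk$ is carried along untouched. Multiplying the $\Sigma_i$ bounds, the $n^{t-k}$ from the non-singleton binomial coefficients cancels against the factors $n^{-c_it_i}$ over the non-singletons (each of which beats $n^{-2(1-o(1))t_i}$), and the $n^{-c_i}$ factors from the singletons absorb the remaining deficit; tracking the constants, the exponent of $n$ comes out to at most $-(1-10\varepsilon)t$ once $\eta$ is small enough relative to $\varepsilon$, giving $Q(\mathbf t)\le n^{-(1-10\varepsilon)t}\binom nk$.

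The hard part will be making the single-factor estimate genuinely uniform over all admissible $t_i$ and $t$: the product $\binom ns(pt_i)^s$ can be as large as $\exp(npt_i)$, and it must be tamed by the two shrinking factors $e^{-pt_i(n-s-t)}$ and $e^{-p\binom s2}$ together, while checking that the $e^{+pt_it}$ term hidden inside $e^{-pt_i(n-s-t)}$ — which becomes sizeable when $t$ is of order $z$ — does not overwhelm the gain from $e^{-p\binom s2}$. This is precisely where the slack $(1-\eta)<1$, hence $n^2p^3=2(1-\eta)^3\log n<2\log n$, together with the a priori caps $s\le z$ and $t\le 8z$, has to be used, and getting all the constants to line up against the target exponent $1-10\varepsilon$ is the delicate point.
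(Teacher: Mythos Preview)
Your per-factor strategy matches the paper's argument for the regime $t\le n^{2/3}$: split $\Sigma_i$ according to whether $s$ is near the peak $npt_i$ or not, bound the near-peak part using $e^{-p\binom s2}\le n^{-(1-3\varepsilon)t_i^2}$ and the far-from-peak part by the tail of $(en pt_i/s)^s e^{-npt_i}$, and then reassemble. That part is fine.

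The gap is in the regime $n^{2/3}<t\le 8z$. Here the per-factor target $\Sigma_i\le n^{-c_it_i}$ with $c_i\ge 1$ is simply false. Take $t_i=1$ and $t=8z$: the problematic term you flag satisfies
\[
e^{pt_it}=e^{8pz}=e^{8\cdot 2^{1/3}(1-\eta)\log n}\approx n^{10},
\]
while the gain from $e^{-p\binom s2}$ at the peak $s\approx np$ is only $n^{-(1-\eta)^3}\approx n^{-1}$. So $\Sigma_i$ for a singleton is of order $n^{+9}$, not $n^{-1}$. No choice of small $\eta$, nor the caps $s\le z$ and $t\le 8z$, rescues this: the deficit is a fixed power of $n$ per singleton, and there can be order $t$ singletons.

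The paper's fix is \emph{not} to bound the $\Sigma_i$ independently in this regime. It goes back to the underlying expectation and uses the joint constraint $\sum_i s_i\le n$, which holds because the sets $S_i=V(R_i)\cap V(B)$ are pairwise disjoint. This forces $\sum_{i:\,s_i\ge(1-\varepsilon)npt_i}t_i\le \frac{1}{(1-\varepsilon)p}\le \frac{t}{\log^{1/3}n}$, so only a negligible fraction of the total $t$-mass can sit at the peak. For the remaining indices the far-from-peak bound $f_2(t_i,t)\le n^{-2t_i}$ applies, and the at-peak factors, though individually as large as $n^{18t_i}$, contribute only $n^{O(t/\log^{1/3}n)}=n^{o(t)}$ in aggregate. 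Your outline needs this cross-factor argument; the ``delicate point'' you identify cannot be resolved one factor at a time.
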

		
		Assuming \Cref{Qclaim}, we now argue that the expected number of choices for which the above properties are satisfied, and additionally either $G_1[\bigcup_j T_j]$ is connected or $\mathbf{t}=(1,1)$ and there is an edge between $T_1$ and $T_2$ in $G_1\cup G_2$, is small. We first deal with the latter case: for $\mathbf{t}=(1,1)$ the expected number of such choices is at most $2pQ((1,1))\leq n^{20\varepsilon}p=o(n^{1/2})$ for $\varepsilon$ sufficiently small. 
		
		If $t\geq 3k$ then the bound from the claim immediately gives $n^{-(2/3-10\varepsilon) t}$. If not, we factor in the probability of $G_1[\bigcup_j T_j]$ being connected (note that this is independent of the events previously considered). This is at most $t^{t-2}p^{t-1}$, the expected number of spanning trees on $t$ vertices, and trivially at most $1$. By considering the cases $tp\geq 1$ and $tp<1$ separately, we have $\min\{1,t^{t-2}p^{t-1}\}\leq t^kp^{k-1}\leq (3k)^kp^{k-1}$. Now we have 
		\[(3k)^kp^{k-1}Q(\mathbf t)\leq p^{-1}(3enp)^kn^{-(1-10\varepsilon)t}.\]
		Noting that $3enp\leq n^{1/3+\varepsilon}$ for $n$ sufficiently large, $p^{-1}\leq n^{2/3}$ and $k\leq t$, this is at most $n^{-(2/3-11\varepsilon)(t-1)}$. We now need to sum this bound over all possible choices of $\mathbf{t}$, obtaining $\sum_{t\geq 2}p(t)n^{-(2/3-11\varepsilon)(t-1)}$, where $p(t)$ is the partition function, which is known to be subexponential. Thus this overall bound is $o(n^{-1/2})$.
		
		To complete the proof, it only remains to justify \Cref{Qclaim}.
		\begin{proof}[Proof of \Cref{Qclaim}]
			Recall that $t=n-|V(B)|\leq 8n^{2/3}\log^{2/3}n$. We first give an argument to cover the case that $t$ is not too close to this, then give the additional steps needed to deal with larger $t$. We use the fact that, for any $c>0$, the function $c^xx^{-x}$ is increasing for $x<c/e$, and decreasing thereafter. 
			We bound 
			\[f(t_i,t):=\sum_{s=1}^z\binom{n}{s}(pt_i)^se^{-pt_i(n-s-t)-p\binom{s}{2}},\] which we split into two sums depending on the value of $s$. Let 
			\[f_1(t_i,t)=\sum_{\substack{(1-\varepsilon)pt_in\leq s\leq\\\min\{z,(1+\varepsilon)pt_in\}}}\binom{n}{s}(pt_i)^se^{-pt_i(n-s-t)-p\binom{s}{2}},\]
			and $f_2(t_i,t)=f(t_i,t)-f_1(t_i,t)$. 
			For $(1-\varepsilon)pt_in\leq s$, we have
			\[e^{-p\binom s2}\leq e^{-n^2p^3t_i^2(1/2-\varepsilon)}\leq n^{-(1-3\varepsilon)t_i^2},\]
			whereas
			\[\sum_{\substack{(1-\varepsilon)pt_in\leq s\leq\\\min\{z,(1+\varepsilon)pt_in\}}}\binom ns(pt_i)^s\leq(1+pt_i)^n\leq e^{pt_in}.\]
			Thus
			\begin{equation}f_1(t_i,t)\leq e^{pt_i(\min\{z,(1+\varepsilon)pt_in\}+t)}n^{-(1-3\varepsilon)t_i^2}.\label{f1}\end{equation}
			
			Suppose $t\leq n^{2/3}$ and $t_i\leq \bfrac{n}{\log n}^{1/3}$. Then $(1+\varepsilon) npt_i\leq 2n^{2/3}$, and so \[p(\min\{z,(1+\varepsilon)pt_in\}+t)\leq 3\log^{1/3}n<\varepsilon\log n\] for $n$ sufficiently large. Thus we obtain $f_1(t_i,t)\leq n^{-(1-3\varepsilon)t_i^2+\varepsilon t_i}$. This is $n^{-(1-4\varepsilon)}$ if $t_i=1$ and is otherwise at most $n^{-(2-7\varepsilon)t_i}$. 
			
			Now suppose $t_i\geq 21$. Since $t\leq 8z$, we have $p(\min\{z,(1+\varepsilon)pt_in\}+t)\leq 9pz\leq 18\log n$. Thus we obtain $f_1(t_i,t)\leq n^{-(1-3\varepsilon)t_i^2+18t_i}\geq n^{-2t_i}$.
			
			Next we turn to $f_2(t_i,t)$. If $s<(1-\varepsilon)pt_in$ then
			\[\binom ns(pt_i)^se^{-pt_in}\leq\bfrac{ept_in}{s}^se^{-pt_in}\leq \bfrac{e}{1-\varepsilon}^{(1-\varepsilon)pt_in}e^{-pt_in}\leq e^{-2\delta pt_in}\]
			for some $\delta>0$ depending on $\varepsilon$, and similarly for $s>(1+\varepsilon)pt_in$. Since $s+t\leq 9z\ll \delta n$ and $pn\gg \log n$, 
			\[\binom ns(pt_i)^se^{-pt_i(n-s-t)}\leq e^{-\delta pt_in}\leq n^{-2t_i-1}.\]
			Summing this over values of $s\leq z$ that are not counted in $f_1$, we obtain $f_2(t_i,t)\leq n^{-2t_i}$.
			
			For $t\leq n^{2/3}$, this proves the claim, since $f(t_i,t)=f_1(t_i,t)+f_2(t_i,t)$ is at most $n^{-(1-5\varepsilon)}$ if $t_i=1$ and is otherwise at most $n^{-(2-8\varepsilon)t_i}$, so 
			\[Q(\mathbf{t})\leq \left(\prod_{i:t_i>1}n^{t_i}f(t_i,t)\right)\left(\prod_{i:t_i=1}f(1,t)\right)\binom nk\leq n^{-(1-8\varepsilon)t}\binom nk.\]
			
			If $t>n^{2/3}$, we need to bound $Q(\mathbf{t})$ more carefully. We have
			\[Q(\mathbf{t})\leq\sum_{s_1,\ldots,s_\ell}\left(\prod_{i=1}^{\ell}\binom{n}{s_i}(pt_i)^{s_i}e^{-pt_i(n-s_i-t)-p\binom{s_i}{2}}\right)\left(\prod_{i=1}^{\ell-k}\binom n{t_i}\right)\binom{n}{k},
			\]
			where the sum is taken over all $\ell$-tuples $s_1,\ldots,s_\ell$ satisfying $1\leq s_i\leq z$ for each $i$ and $\sum_is_i\leq n$. We break this sum up by considering, for each $i$, whether or not $(1-\varepsilon)pt_in\leq s_i\leq(1+\varepsilon)pt_in$. Fix a sequence $\sigma_1,\ldots,\sigma_\ell$ with $\sigma_i=1$ if this inequality is satisfied, and $\sigma_i=2$ otherwise.
			
			Note that 
			$\sum_{i:\sigma_i=1}(1-\varepsilon)pt_in\leq n$ and thus \begin{equation}\sum_{i:\sigma_i=1}t_i\leq \frac{t}{\log^{1/3}n}.\label{few}\end{equation}
			We thus obtain
			\[Q(\mathbf{t})\leq\sum_{\sigma_1,\ldots,\sigma_\ell}\left(\prod_{i=1}^\ell f_{\sigma_i}(t_i,t)\right)\left(\prod_{i=1}^{\ell-k}\binom n{t_i}\right)\binom{n}{k},\]
			where the sum is taken over sequences satisfying \eqref{few}. As before, we have $f_2(t_i,t)\leq n^{-2t_i}$. To bound $f_1(t_i,t)$, we use \eqref{f1} and the fact that $t\leq 8z$ to obtain $f_1(t_i,t)\leq n^{18t_i}$. Thus
			\[Q(\mathbf{t})\leq\sum_{\sigma_1,\ldots,\sigma_\ell}n^{-t+20\sum_{i:\sigma_i=2}t_i}\leq 2^\ell n^{-(1-\varepsilon)t}\leq n^{-(1-2\varepsilon)t},\]
			using \eqref{few} and $\ell\leq t$.
		\end{proof}
		This completes the proof of~\Cref{noothernac}.
	\end{proof}
	
	\section{Concluding remarks}
	
	Our threshold results give a complete picture of the emergence of stable cuts and NAC-colourings in random graphs. However, there are several avenues for further work in this area.
	
	Firstly, what if we relax stable cuts to allow other types of sparse cuts? In particular, what is the correct analogue of Theorem \ref{thm:ERnostablecut} for cuts that induce a forest? Forest cuts were introduced by Chernyshev, Rauch, and Rautenbach \cite{chernyshev.etal_2024}, and further studied in \cite{botler.etal_2024,BNSVRV25}. A plausible conjecture is that the threshold for no forest cut coincides with the threshold for every neighbourhood to contain a cycle. Noting that a graph is $1$-degenerate if and only if it is a forest, one could more generally consider $k$-degenerate cuts. Alternative relaxations of the stable cut problem include disconnected cuts (see \cite{disco-cut}).
	
	Secondly, while Theorem \ref{thm:nacthreshold} tells us when the random graph has no flexible quasi-injective realisation, it would be interesting to also determine the threshold for having no flexible \emph{injective} realisation. An immediate obstacle is the lack of a known combinatorial classification of which graphs have this property.  That is, there is no analogue of NAC-colourings for flexible injective realisations. However, Grasegger, Legersk\'y, and Schicho \cite{GLSinjective} do have some partial results in this direction.
	
	Thirdly, we can move from an entirely random setting to a partially random one. 
	The study of properties of randomly perturbed graphs was introduced by Bohman, Frieze, and Martin \cite{rand-pet-ham} in order to build a bridge between classical results on Hamiltonian cycles in extremal versus probabilistic settings. Randomly perturbed graphs have since been studied in a wide range of other contexts. A particularly relevant example to our work is the existence of triangle factors \cite{rand-pet-tri}. The randomly perturbed model starts from a deterministic graph with some degree condition, and sprinkles on a small number of additional edges at random. Typically, the desired property can be obtained even when the degree bound and number of random edges are both much lower than would be sufficient individually. In the worst case, starting from a graph taken from some suitable class which allows flexible realisations, how many random edges should be added to eliminate all NAC-colourings? 
	
	\section*{Acknowledgements}
	
	This project originated from the Fields Institute Focus Program on Geometric Constraint Systems in Toronto and a substantial part of the work was done during a Research-in-Groups Programme funded by the International Centre for Mathematical Sciences, Edinburgh.
	The authors are grateful to both institutions for their hospitality and generous financial support, and to Jan Legersk\'y for helpful discussions.
	
	K.\,C.\ was supported by the Australian Government through the Australian Research
	Council’s Discovery Projects funding scheme (project DP210103849). 
	A.\,N.\ was partially supported by EPSRC grant EP/X036723/1. T.\,H.\ is supported by the Institute for Basic Science (IBS-R029-C1).

\end{document}